\newtheorem{theorem}{Theorem}[section]
\newtheorem*{theorem*}{Theorem}
\newtheorem{proposition}[theorem]{Proposition} 
\newtheorem{lemma}[theorem]{Lemma}
\newtheorem{cor}[theorem]{Corollary}
\theoremstyle{definition}
\newtheorem{definition}[theorem]{Definition}
\newtheorem{question}[theorem]{Question}
\theoremstyle{remark}
\newcommand{\defemp}{\textit}
\newcommand{\forces}{\Vdash}
\newcommand{\zf}{\textrm{ZF}}
\newcommand{\zfc}{\textrm{ZFC}}
\newcommand{\ch}{\textnormal{CH}}
\newcommand{\ma}{\textnormal{MA}}
\newcommand{\dom}{\textnormal{Dom}}
\newcommand{\cf}{\textnormal{cf}}
\newcommand{\mf}{\mathfrak}
\newcommand{\mc}{\mathcal}
\newcommand{\mbb}{\mathbb}
\newcommand{\p}{\mathcal{P}}
\newcommand{\baire}{{^\omega \omega}}
\newcommand{\bairenodes}{{^{<\omega}\omega}}
\newcommand{\cantor}{{^{\omega}2}}
\newcommand{\exit}{\textnormal{Exit}}
\title{Disjoint Borel Functions}
\author{Dan Hathaway}
\address{Mathematics Department \\
University of Denver\\
Denver, CO 80208, U.S.A.}
\email{Daniel.Hathaway@du.edu}
\thanks{A portion of the results of this paper
 were proven during the September 2012 Fields Institute Workshop
 on Forcing while the author was supported by the Fields Institute.
Work was also done while under NSF grant DMS-0943832.
}
\begin{document}

\begin{abstract}
For each $a \in \baire$, we define
 a Baire class one function
 $f_a : \baire \to \baire$ which
 encodes $a$ in a certain sense.
We show that for each Borel
 $g : \baire \to \baire$,
 $f_a \cap g = \emptyset$ implies
 $a \in \Delta^1_1(c)$
 where $c$ is any code for $g$.
We generalize this theorem
 for $g$ in larger pointclasses $\Gamma$.
Specifically, if $\Gamma = \mathbf{\Delta}^1_2$,
 then $a \in L[c]$.
Also for all $n \in \omega$,
 if $\Gamma = \mathbf{\Delta}^1_{3 + n}$,
 then $a \in \mathcal{M}_{1 + n}(c)$.
\end{abstract}

\maketitle

\section{Introduction}

\begin{definition}
A challenge-response relation
 (\defemp{c.r.-relation})
 is a triple $\langle R_-, R_+, R \rangle$ such that
 $R \subseteq R_- \times R_+$.
The set $R_-$ is the set of \defemp{challenges},
 and $R_+$ is the set of \defemp{responses}.
When $c R r$, we say that $r$ \defemp{meets} $c$.
\end{definition}

\begin{definition}
A backwards generalized Galois-Tukey connection
 (\defemp{morphism}) from
 $\mathcal{A} = \langle A_-, A_+, A \rangle$ to
 $\mathcal{B} = \langle B_-, B_+, B \rangle$ is a pair
 $\langle \phi_-, \phi_+ \rangle$ of functions
 $\phi_- : B_- \to A_-$ and
 $\phi_+ : A_+ \to B_+$
 such that
 $$(\forall c \in B_-)(\forall r \in A_+)\,
 \phi_-(c)\, A\, r \Rightarrow
 c\, B\, \phi_+(r).$$
\end{definition}
When there is a morphism from
 $\mathcal{A}$ to $\mathcal{B}$,
 let us say that
 $\mathcal{A}$ is \textit{above}
 $\mathcal{B}$ and
 $\mathcal{B}$ is \textit{below}
 $\mathcal{A}$.

\begin{definition}
The \defemp{norm} of a c.r.-relation
 $\mathcal{R} =
 \langle R_-, R_+, R \rangle$ is
 $$|| \mathcal{R} || := \min \{
 |S| : S \subseteq R_+ \mbox{ and }
 (\forall c \in R_-)(\exists r \in S)\,
 c\, R\, r \}.$$
\end{definition}

If there is a morphism from $\mc{A}$
 to $\mc{B}$, then
 $||\mc{A}|| \ge ||\mc{B}||$.
Challenge-response relations and morphisms between
 them were introduced by Vojtas as a way to abstract
 features of the study of cardinal charcteristics of the continuum.
For more on c.r.-relations,
 see \cite{Blass} and \cite{Vojtas}.

Temporarily fix a pointclass $\Gamma$.
Let $\mathcal{F}_\Gamma$ be the set of functions from
 $\baire$ to $\baire$ in $\Gamma$.
Let $D$ be the binary relation of disjointness
 of functions from $\baire$ to $\baire$.
That is, given two functions $f, g : \baire \to \baire$,
 let $$f D g :\Leftrightarrow
 f \cap g = \emptyset
 \Leftrightarrow
 (\forall x \in \baire)\, f(x) \not= g(x).$$
Let $\mathcal{D}_\Gamma$ be the c.r.-relation
 $$\mathcal{D}_\Gamma := \langle
 \mathcal{F}_\Gamma,
 \mathcal{F}_\Gamma,
 D \rangle.$$
In this paper we will be interested in the
 c.r.-relation $\mathcal{D}_\Gamma$ for various
 pointclasses $\Gamma$.

For example, we will be interested in computing
 $||\mathcal{D}_{\mathbf{\Delta}^1_1}||$,
 which is the smallest size of a family of Borel
 functions from $\baire$ to $\baire$ such that
 each Borel function from $\baire$ to $\baire$
 is disjoint from some member of the family.
We will show that
 $||\mathcal{D}_{\mathbf{\Delta}^1_1}|| = 2^\omega$
 by showing that $\mathcal{D}_{\mathbf{\Delta}^1_1}$
 is above a c.r.-relation whose norm is $2^\omega$.
Specifically, we will show that
 $\mathcal{D}_{\mathbf{\Delta}^1_1}$ is above
 $\langle \baire, \baire, \le_{\Delta^1_1} \rangle$,
 where $a \le_{\Delta^1_1} b$ iff $a \in \baire$
 is definable by a $\Delta^1_1$ formula using
 $b \in \baire$ as a parameter.
To define the $\phi_-$ part of the morphism,
 for each $a \in \baire$ we will define a Baire
 class one funciton $f_a : \baire \to \baire$
 (and we will have $\phi_-(a) = f_a$).
The $\phi_+$ part of the morphism will simply map
 each function from $\baire$ to $\baire$ in $\Gamma$
 to any code for that function.
The fact that $\langle \phi_-, \phi_+ \rangle$
 is a morphism is the following statement:
 for each $a \in \baire$ and Borel function
 $g : \baire \to \baire$,
 $$f_a \cap g = \emptyset \Rightarrow
 a \le_{\Delta^1_1} \mbox{any code for } g.$$

We will prove that there is a morphism
 from $\mathcal{D}_{\mathbf{\Delta}^1_1}$
 to $\langle \baire, \baire, \le_{\Delta^1_1} \rangle$
 by proving a general theorem
 (Theorem~\ref{maintheorem})
 which provides a sufficient condition for
 when there exists a morphism from an arbitrary
 $\mathcal{D}_\Gamma$ to an arbitrary
 $\langle \baire, \baire, \prec \rangle$,
 where $\prec$ is an ordering on $\baire$.
Just like the case with
 $\mathcal{D}_{\mathbf{\Delta}^1_1}$,
 we will use the functions $f_a$ for the $\phi_-$ map,
 and the $\phi_+$ map will be ``take any code for''.
Thus, if the appropriate relationship holds
 between $\Gamma$ and $\prec$, then we will have that
 for each $a \in \baire$ and each
 $g : \baire \to \baire$ in $\Gamma$,
 $$f_a \cap g = \emptyset \Rightarrow
 a \prec \mbox{any code for } g.$$
We will get that there exists a morphism from
 $\mathcal{D}_{\mathbf{\Delta}^1_2}$ to
 $\langle \baire, \baire, \le_L \rangle$,
 where $a \le_L b$ iff $a \in L[b]$.
The analogous result for larger $\Gamma$
 uses large cardinals.
We will have that as long as
 $\mc{M}_1(b)$
 (the canonical inner model containing
 $1$ Woodin cardinal and containing $b \in \baire$)
 exists for all $b \in \baire$,
 then there is a morphism from
 $\mathcal{D}_{\mathbf{\Delta}^1_3}$
 to $\langle \baire, \baire, \le_{\mc{M}_1} \rangle$,
 where $a \le_{\mc{M}_1} b$ iff
 $a \in \mc{M}_1(b)$.
Next, as long as
 $\mc{M}_2(b)$ exists for all $b \in \baire$,
 there is a morphism from
 $\mathcal{D}_{\mathbf{\Delta}^1_4}$
 to $\langle \baire, \baire, \le_{\mc{M}_2} \rangle$.
The pattern continues like this through
 the projective hierarchy.

In this paper, we are considering
 functions from $\baire$ to $\baire$
 in a pointclass $\Gamma$.
We could have instead considered
 functions in $\Gamma$
 from an arbitrary uncountable Polish space $X$ to
 an arbitrary Polish space $Y$,
 and our results would not change much.
The appropriate encoding function
 $f''_a : X \to Y$
 could be defined by first defining 
 $f'_a : {^\omega 2} \to \baire$
 in
 a way similar to $f_a$
 and then using an injection of
 ${^\omega 2}$ into $X$
 and a surjection of $\baire$ onto $Y$.
We trust that the interested reader can
 work through the details without trouble.

\section{Related Results}

Before considering $\mc{D}_\Gamma$
 for various $\Gamma$,
 we will consider related
 c.r.-relations.
First, consider the everywhere domination
 ordering of functions from $\baire$ to $\omega$.
That is, given $f, g : \baire \to \omega$,
 we write $f \le g$ iff
 $$(\forall x \in \baire)\, f(x) \le g(x).$$
Given any pointclass $\Gamma$,
 let $\mc{E}_\Gamma$ be the c.r.-relation
 whose challenges and responses are $\Gamma$ functions
 from $\baire$ to $\omega$,
 and $g$ meets $f$ iff $f \le g$.

Next, consider the pointwise eventual domination
 ordering of functions from $\baire$ to $\baire$.
That is, given $f, g : \baire \to \baire$,
 we write $f \le^* g$ iff
 $$(\forall x \in \baire)
 \{ n \in \omega : f(x)(n) > g(x)(n) \}
 \mbox{ is finite.}$$
Given any pointclass $\Gamma$,
 let $\mc{R}_\Gamma$ be the c.r.-relation whose
 challenges and responses are $\Gamma$ functions
 from $\baire$ to $\baire$,
 and $g$ meets $f$ iff $f \le^* g$.

It is not difficult to see that for any
 reasonably closed pointclass $\Gamma$,
 there is a morphism from
 $\mc{E}_\Gamma$ to $\mc{R}_\Gamma$ and
 there is a morphism from
 $\mc{R}_\Gamma$ to $\mc{D}_\Gamma$.
The relation $\mc{E}_\Gamma$ for a fixed $\Gamma$
 is relatively high up in the hierarchy of
 c.r.-relations, as we will soon see.

Given a sequence $a \in \baire$,
 let $[[a]] := \{ a \restriction l : l \in \omega \}$.
Given a tree $T \subseteq \bairenodes$, let
 $\mbox{Exit}(T)$ be the (Baire class one) function
 $$\mbox{Exit}(T)(x) := \min \{ l :
 x \restriction l \not\in T \}.$$
The following result shows a way of constructing
 a morphism from $\mc{E}_\Gamma$ to another relation
 in a way which does not depend on $\Gamma$:
\begin{theorem}
\label{powerfulrealnumcode}
Fix $a \in \baire$.
If $M$ is an $\omega$-model $\zf$
 such that some $g : (\baire)^M \to \omega$ in $M$
 satisfies $$(\forall x \in (\baire)^M)\, \exit([[a]])(x) \le g(x),$$
 then $a$ is $\Delta^1_1$ definable in $M$
 using $g$ as a predicate.
\end{theorem}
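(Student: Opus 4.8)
The plan is to recover $a$ inside $M$ from a well-founded tree built out of $g$. First reduce to the case $a \notin (\baire)^M$: since all initial segments of $a$ lie in $[[a]]$, the value $\exit([[a]])(a)$ is infinite (undefined), so a function into $\omega$ dominating $\exit([[a]])$ on $(\baire)^M$ cannot be defined at $a$; alternatively, $\exit([[a]])$ takes arbitrarily large values on the cone above each $a\restriction n$ (it is at least $k+1$ at any real agreeing with $a$ through its first $k$ coordinates and differing further on), which again forces $a\notin(\baire)^M$. So from now on $a\notin(\baire)^M$, and hence for every $n$ and every $x\in(\baire)^M$ with $a\restriction n\subseteq x$ we have $x\ne a$, so $\exit([[a]])(x)\ge n+1$ and therefore $g(x)\ge n+1$. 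We may also assume $a$ is not eventually $0$, as otherwise $a$ is recursive and there is nothing to prove.

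Now work inside $M$ and set
$$T := \{\, s \in \bairenodes : (\forall x \in \baire)(\, s \subseteq x \rightarrow g(x) > |s| \,)\,\}.$$
Then $T\in M$, $T$ is a subtree of $\bairenodes$, and, expressed through the graph of $g$, $T$ is $\Pi^1_1$ in the predicate $g$. By the previous paragraph $a\restriction n\in T$ for every $n$, so $[[a]]\subseteq T$ and $a$ is a genuine branch of $T$ in $V$. On the other hand $M$ believes $T$ has no infinite branch, since a branch $b\in(\baire)^M$ would force $g(b)>n$ for all $n$, which is impossible; thus $M\models$ ``$T$ is a well-founded tree on $\omega$''. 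In particular $M$ has a rank function $\rho\colon T\to\ord^M$ with $\rho\in M$; were $M$ transitive this would be a genuine ordinal ranking, contradicting the fact that $a$ is a branch of $T$, so $M$ is ill-founded and $\rho(a\restriction 0)>\rho(a\restriction 1)>\cdots$ is an infinite descent inside the ill-founded part of $\ord^M$.

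The crux is to extract $a$ from $T$ inside $M$. The argument, carried out in $M$, is: by boundedness applied to the $\Pi^1_1(g)$ well-founded tree $T$ (via its Kleene--Brouwer ordering), $T$ is $\Delta^1_1$ in $g$, with a $\Delta^1_1(g)$ rank function; and relative to this data there is a $\Delta^1_1(g)$ description of the branch of $T$ along which $\rho$ descends through the ill-founded part of $\ord^M$. Since $a$ is such a branch, this produces the desired $\Delta^1_1$ formula in the predicate $g$ that defines $a$ in $M$.

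The main obstacle is precisely this last step, and its source is that $g$ is only assumed to dominate $\exit([[a]])$, not to equal it. If $g$ were tight (say $g=\exit([[a]])$ on $(\baire)^M$), then, writing $\bar t$ for the extension of a finite sequence $t$ by $0$'s, the recursive-in-$g$ tree $\{\, s\in\bairenodes : (\forall m\le|s|)\ g(\overline{s\restriction m})>m\,\}$ has $a$ as its \emph{unique} branch, so $a$ is outright $\Delta^1_1(g)$; but for a dominating, non-tight $g$ this tree --- and $T$ itself --- may acquire extra branches in $V$, so $a$ cannot be read off as ``the'' branch. The content of the proof is to use that $g\in M$ and $M\models\zf$ --- concretely, that $T$ is well-founded \emph{in} $M$ with nonstandard rank --- to pin $a$ down by a $\Delta^1_1(g)$ definition inside $M$; everything else ($T$ being $\Pi^1_1(g)$, containing $[[a]]$, and being well-founded in $M$) is routine.
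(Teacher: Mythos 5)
Your opening moves coincide with the paper's: you form essentially the same set of nodes (the paper's $B = \{ t \in \bairenodes : g(x) \ge |t| \text{ for all } x \sqsupseteq t \text{ in } M\}$, which is $\Pi^1_1$ in the predicate $g$ and contains every $a \restriction l$), and you correctly note that $M$ believes this tree is well-founded while $a$ threads through it in $V$. But you then aim to define $a$ as ``the branch'' singled out by the $M$-rank function's descent through the ill-founded part of $\ord^M$, you correctly observe that this cannot work as stated (the tree may have many branches in $V$, and the ill-founded part of $\ord^M$ is not definable in $M$), and you stop there, declaring the remaining step to be ``the content of the proof.'' That is a genuine gap, not a deferrable technicality: it is exactly the theorem. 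The appeal to $\Sigma^1_1$-boundedness does not help either --- boundedness does not convert a $\Pi^1_1(g)$ well-founded tree into a $\Delta^1_1(g)$ one in the way your sketch assumes, and nothing in your outline produces a formula that distinguishes $a$ from any other real threading the tree.

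The paper's mechanism is local rather than global, and this is the idea your proposal is missing. Fix $l'$ and look at the immediate successors of $a \restriction l'$. For every $n \ne a(l')$ and every $x \in (\baire)^M$ with $x \sqsupseteq (a\restriction l')^\frown n$, the first place $x$ exits $[[a]]$ is exactly level $l'+1$, so $\exit([[a]])(x) = l'+1$ and the domination hypothesis forces $g(x) \ge l'+1 = |(a\restriction l')^\frown n|$; hence every sibling $(a\restriction l')^\frown n$ with $n \ne a(l')$ lies in $B$. The paper then argues --- and this is where well-foundedness enters, via the observation that an infinite chain of $B$ in $M$ would produce $x \in (\baire)^M$ with $g(x) \ge l'$ for infinitely many $l'$ --- that there is an $l$ such that $a \restriction l'' \notin B$ for all $l'' \ge l$. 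Granting this, for $l' \ge l$ the value $a(l')$ is recoverable inside $M$ as the unique $n$ with $(a\restriction l')^\frown n \notin B$: writing ``$(a\restriction l')^\frown n \notin B$'' existentially gives a $\Sigma^1_1(g)$ definition, writing ``every $n' \ne n$ has $(a\restriction l')^\frown n' \in B$'' universally gives a $\Pi^1_1(g)$ one, and the finite initial segment $a \restriction l$ is coded by a single integer. So $a$ is not located as a branch of the tree at all; each coordinate is read off as the unique child of the current node at which membership in $B$ fails, with the domination hypothesis guaranteeing that all other children succeed. Your proposal never isolates this sibling dichotomy, which is precisely the step you flag as the obstacle.
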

\begin{proof}
Fix $M$ and $g$ satisfying
 the hypothesis of the theorem.
Let $B \subseteq \bairenodes$ be the set
 $$\{ t \in \bairenodes :
 g(x) \ge |t| \mbox{ for all } x \sqsupseteq t
 \mbox{ in } M \}.$$
Note that $B$ is defined (in $M$) by a $\Pi^1_1$ formula
 that uses $g$ as a predicate.
That is, $B$ is $\Pi^1_1$ in $g$.
We claim there is some $l \in \omega$
 satisfying $(\forall l' \ge l)\,
 a \restriction l' \not\in B$.
If not,
 the poset of elements of $B$
 ordered by extension would be ill-founded,
 and therefore would be ill-founded in $M$,
 so there would exist $x \in (\baire)^M$ satisfying
 $(\exists^\infty l' \in \omega)\, g(x) \ge l'$,
 which is impossible.
Now, fix such an $l$.

We claim that for each $l' \ge l$,
 $a(l')$ is the unique $n$ satisfying
 $(a \restriction l') ^\frown n \not\in B$.
Indeed, since $\exit([[a]]) \le g$,
 for each $l' \ge l$ we have
 $$(\forall n \in \omega)\, a(l') \not= n
 \Rightarrow (a \restriction l') ^\frown n \in B.$$
The other direction is given
 by the property we arranged $l$ to have.
Thus, we have the following definition (in $M$) for $a$:
$$a(l') = \begin{cases}
 a(l') & \mbox{if } l' < l, \\ 
 n     & \mbox{if } l' \ge l \mbox{ and }
         (\forall n' \not= n)
         (\forall x \sqsupseteq (a \restriction l') ^\frown n' \mbox{ in } M)\,
         g(x) \ge l'+1.
 \end{cases}$$
Since $\langle a(l') : l' < l \rangle$
 can be coded by a single number,
 we have a $\Pi^1_1$ definition (in $M$) for $a$
 which uses $g$ as a predicate.
We also have a $\Sigma^1_1$ variant:
$$a(l') = \begin{cases}
 a(l') & \mbox{if } l' < l, \\
 n     & \mbox{if } l' \ge l \mbox{ and }
         (\exists x \sqsupseteq (a \restriction l') ^\frown n \mbox{ in } M)\,
         g(x) < l'+1.
 \end{cases}$$
Thus, $a$ is $\Delta^1_1$ definable in $M$
 using $g$ as a predicate.
\end{proof}
Let use write ``$All$'' to refer to
 the pointclass of all pointsets.
\begin{cor}
\label{everywheredomtype1cor}
There is a morphism from
 $\mc{E}_{All}$ to
 $\langle \baire,
 {^{(\baire)}\omega}, \le_{\Delta^1_1} \rangle$.
\end{cor}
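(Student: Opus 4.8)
The plan is to extract the desired morphism $\langle \phi_-, \phi_+ \rangle$ directly from Theorem~\ref{powerfulrealnumcode}. The challenges of $\mc{E}_{\p(\baire)}$ are all functions from $\baire$ to $\omega$, and the responses are likewise all functions from $\baire$ to $\omega$; the challenges of $\langle \baire, {^{(\baire)}\omega}, \le_{\Delta^1_1} \rangle$ are reals $a \in \baire$, and its responses are arbitrary functions $g : \baire \to \omega$. So I would set $\phi_- : \baire \to {^{(\baire)}\omega}$ by $\phi_-(a) := \exit([[a]])$, the Exit function of the tree of initial segments of $a$, which is a (Baire class one) function from $\baire$ to $\omega$, hence a legitimate challenge of $\mc{E}_{\p(\baire)}$. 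For $\phi_+$ I would take the identity map on ${^{(\baire)}\omega}$ (every function from $\baire$ to $\omega$ is already a valid response on both sides), so $\phi_+(g) := g$.

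It then remains to verify the morphism inequality: for every $g : \baire \to \omega$ and every $a \in \baire$, if $\phi_-(a) \le g$ in $\mc{E}_{\p(\baire)}$ — that is, if $(\forall x \in \baire)\, \exit([[a]])(x) \le g(x)$ — then $a \le_{\Delta^1_1} g$, i.e. $a$ is $\Delta^1_1$ definable using $g$ as a predicate. But this is exactly Theorem~\ref{powerfulrealnumcode} applied with $M = V$ (or, more carefully, with $M$ any sufficiently large $\omega$-model of $\zf$ containing $g$; since the $\Delta^1_1$ definition produced there is absolute, it defines $a$ in $V$ as well). The hypothesis of that theorem — that some $g$ in the model everywhere dominates $\exit([[a]])$ — is precisely the assumed relation $\phi_-(a) \le g$, and its conclusion is precisely $a \le_{\Delta^1_1} g$.

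The only point requiring a word of care is the passage from "$\Delta^1_1$ definable in $M$" to "$\Delta^1_1$ definable (in $V$) using $g$ as a predicate." I would handle this by noting that the $\Sigma^1_1$ and $\Pi^1_1$ formulas exhibited in the proof of Theorem~\ref{powerfulrealnumcode} have their quantifiers ranging over $\baire$, and by Mostowski absoluteness a $\Sigma^1_1(g)$ or $\Pi^1_1(g)$ statement that holds in an $\omega$-model of $\zf$ containing $g$ holds in $V$; so the same pair of formulas, together with the finite parameter coding $a \restriction l$, defines $a$ outright. This is the step I expect to be the main (and essentially only) obstacle, and it is minor; everything else is bookkeeping about which sets serve as challenges and responses on the two sides.
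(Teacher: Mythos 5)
Your proposal is correct and matches the paper's proof: the paper likewise sets $\phi_-(a) := \exit([[a]])$, takes $\phi_+$ to be (implicitly) the identity, and applies Theorem~\ref{powerfulrealnumcode} with $M = V$, which makes your extra absoluteness discussion unnecessary since ``$\Delta^1_1$ definable in $V$'' is already the desired conclusion.
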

\begin{proof}
Fix $a \in \baire$.
Let $f_a := \mbox{Exit}([[a]])$.
By the above theorem taking $M = V$, if
 $g : \baire \to \omega$ satisfies $f_a \le g$,
 then $a$ is $\Delta^1_1$ definable using $g$
 as a predicate.
\end{proof}
\begin{cor}
\label{boreleveryhweredom}
There is a morphism from
 $\mc{E}_{\mathbf{\Delta}^1_1}$ to
 $\langle \baire, \baire, \le_{\Delta^1_1} \rangle$.
\end{cor}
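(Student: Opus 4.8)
The plan is to refine the proof of Corollary~\ref{everywheredomtype1cor}, this time keeping track of a Borel code for $g$ rather than using $g$ as an abstract predicate. A morphism from $\mc{E}_{\mathbf{\Delta}^1_1}$ to $\langle \baire, \baire, \le_{\Delta^1_1} \rangle$ consists of a map $\phi_-$ sending each $a \in \baire$ to a $\mathbf{\Delta}^1_1$ function from $\baire$ to $\omega$, together with a map $\phi_+$ sending each $\mathbf{\Delta}^1_1$ function $g : \baire \to \omega$ to a real, such that
$$\phi_-(a) \le g \;\Rightarrow\; a \le_{\Delta^1_1} \phi_+(g).$$
First I would set $\phi_-(a) := f_a = \exit([[a]])$ as before; this is a Baire class one function, hence $\mathbf{\Delta}^1_1$, so it is a legitimate challenge of $\mc{E}_{\mathbf{\Delta}^1_1}$. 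For $\phi_+$, I would fix, using some reasonable coding of Borel sets by reals, for each Borel $g : \baire \to \omega$ a real $c_g$ coding $g$, in the sense that the graph $\{ (x,k) \in \baire \times \omega : g(x) = k \}$ is $\Delta^1_1(c_g)$; put $\phi_+(g) := c_g$.

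Now suppose $a \in \baire$ and $g : \baire \to \omega$ is Borel with $f_a \le g$. Applying Theorem~\ref{powerfulrealnumcode} with $M = V$, we obtain from the two displayed case-definitions in its proof both a $\Pi^1_1$ and a $\Sigma^1_1$ definition of $a$ in which $g$ occurs only through assertions of the form ``$g(x) \ge l'+1$'' and ``$g(x) < l'+1$''. The key point is that each such assertion, as a relation of the variables $x$ and $l'$, is $\Delta^1_1(c_g)$, since the graph of $g$ is $\Delta^1_1(c_g)$ and $\Delta^1_1(c_g)$ is closed under number quantification. Substituting these $\Delta^1_1(c_g)$ subformulas into the $\Pi^1_1$ (resp.\ $\Sigma^1_1$) definition of $a$, and using that $\Pi^1_1(c_g)$ and $\Sigma^1_1(c_g)$ are closed under substitution of $\Delta^1_1(c_g)$ relations and under number quantifiers, yields a $\Pi^1_1(c_g)$ definition and a $\Sigma^1_1(c_g)$ definition of $a$. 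Hence $a \in \Delta^1_1(c_g)$, i.e.\ $a \le_{\Delta^1_1} \phi_+(g)$, which is exactly the morphism condition.

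There is essentially no serious obstacle here: the real content is Theorem~\ref{powerfulrealnumcode}, and what remains is the routine observation that for a Borel function one may use a code in place of the function itself throughout a $\Sigma^1_1$/$\Pi^1_1$ definition. The only point requiring care is that the set $B$ in the proof of Theorem~\ref{powerfulrealnumcode} (with $M = V$) already has the form ``$g(x) \ge |t|$ for all $x \sqsupseteq t$'', so after the substitution it is genuinely $\Pi^1_1(c_g)$ and not something more complex; the same bookkeeping then absorbs the remaining quantifiers in the definition of $a$.
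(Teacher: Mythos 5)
Your proof is correct, but it routes the key step differently from the paper. Both arguments take $\phi_-(a) = f_a = \exit([[a]])$ and both rest on Theorem~\ref{powerfulrealnumcode}; the difference lies entirely in how one passes from ``$a$ is $\Delta^1_1$ definable using $g$ as a predicate'' to ``$a \in \Delta^1_1(c)$ for a code $c$''. The paper uses the characterization that $a \le_{\Delta^1_1} c$ iff $a$ belongs to every $\omega$-model of $\zf$ containing $c$: it fixes an arbitrary such $M$, notes that the function $\tilde{g}$ coded by $c$ in $M$ is the restriction of $g$ to the reals of $M$, applies Theorem~\ref{powerfulrealnumcode} inside $M$ to conclude $a \in M$, and is done. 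You instead apply the theorem once with $M = V$ and then perform a syntactic substitution: since the graph of $g$ is $\Delta^1_1(c_g)$, and $\Pi^1_1(c_g)$ and $\Sigma^1_1(c_g)$ are closed under substitution of $\Delta^1_1(c_g)$ matrices and under number quantifiers, the two displayed definitions in the theorem's proof become $\Pi^1_1(c_g)$ and $\Sigma^1_1(c_g)$ respectively, giving $a \in \Delta^1_1(c_g)$. Both routes are sound. Yours is more self-contained in that it avoids the $\omega$-model characterization of $\Delta^1_1$, but it requires opening up the proof of Theorem~\ref{powerfulrealnumcode} and verifying that its real quantifiers land in the right pointclasses after the substitution (which you do correctly: the $\Pi^1_1$ form uses only universal real quantifiers and the $\Sigma^1_1$ form only an existential one). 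The paper's version treats the theorem as a black box relativized to $M$ and has the side benefit of rehearsing exactly the $\omega$-model absoluteness template that later drives the Main Theorem and Corollary~\ref{maincor}.
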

\begin{proof}
Fix $a \in \baire$.
Let $f_a := \mbox{Exit}([[a]])$.
Let $g: \baire \to \omega$ be Borel
 and let $c$ be a code for $g$.
If we can show that $a$ is in every
 $\omega$-model which contains $c$,
 we will have that $a \le_{\Delta^1_1} c$.
Let $M$ be an arbitrary $\omega$-model
 which contains $c$.
Letting $\tilde{g}$ be the function in $M$
 coded by $c$, we have that
 $\tilde{g} = M \cap g$.
Hence, in $M$ we have $f_a \le \tilde{g}$,
 so the theorem above tells us that
 $a \in M$.
\end{proof}

Corollary~\ref{boreleveryhweredom} will be
 improved by our result that
 there is a morphism from
 $\mc{D}_{\mathbf{\Delta}^1_1}$ to
 $\langle \baire, \baire, \le_{\Delta^1_1} \rangle$.
The generalizations of Corollary~\ref{boreleveryhweredom}
 to larger pointclasses $\Gamma$ are also
 improved by our main result
 (Theorem~\ref{maintheorem}) about morphisms
 from $\mc{D}_\Gamma$ to orderings
 $\langle \baire, \baire, \prec \rangle$.
On the other hand,
 we do not have an analogue of
 Corollary~\ref{everywheredomtype1cor}
 with
 $\mc{D}_{All}$;
 here we see a qualitative difference
 between $\mc{E}_{All}$ and
 $\mc{D}_{All}$.

Another difference between
 $\mc{E}_{All}$ and
 $\mc{D}_{All}$ is the ability to encode
 not just an $a \in \baire$
 but an $A \subseteq \baire$:

\begin{proposition}
\label{horizcodeintroprop}
Fix a set $X$.
Fix $A \subseteq X$.
There exists a function
 $f_A : {^\omega X} \to \omega$ such that
 whenever $M$ is a transitive model of $\zf$
 with $X \in M$ and
 $M$ contains some
 $g : ({^\omega X})^M \to \omega$ satisfying
 $$(\forall x \in ({^\omega X})^M)\,
 f_A(x) \le g(x),$$
 then $A \in M$.
Moreover,
 there is some $t \in {^{<\omega} X}$ satisfying
 $$A = \{ z \in X : g(x) \ge |t|+1
 \mbox{ for all } x \sqsupseteq t ^\frown z
 \mbox{ in } M \}.$$
\end{proposition}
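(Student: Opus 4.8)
The plan is to mirror the proof of Theorem~\ref{powerfulrealnumcode}, replacing the single real $a$ (encoded by the ray $[[a]]$) by the set $A$, encoded ``horizontally'' through a tree $S_A\subseteq {^{<\omega} X}$ whose branching at each node records which elements of $X$ lie in $A$. Concretely, assuming $X\neq\emptyset$ (the cases $X=\emptyset$ and $|X|=1$ being immediate), fix $q\in X$ and take the ``spine'' of $S_A$ to be the constant sequence $\sigma:=\langle q,q,q,\dots\rangle$. Arrange $S_A$ so that along $\sigma$, at each node $\sigma\restriction m$ the child $(\sigma\restriction m)^\frown z$ is a dead end (a leaf of $S_A$, or a node outside $S_A$ whose immediate predecessor is in $S_A$) for the $z$ on one side of $A$, and is ``live'' -- extending into further infinite structure inside $S_A$ -- for the $z$ on the other side; the side is chosen so that the recovery below reads off $A$ itself rather than its complement. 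Set $f_A:=\exit(S_A)$, with $f_A$ declared to be $0$ on every branch of $S_A$ so that $f_A:{^\omega X}\to\omega$ is total.

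Now suppose $M$ is a transitive model of $\zf$ with $X\in M$ and $g:({^\omega X})^M\to\omega$ in $M$ with $f_A\le g$ on $({^\omega X})^M$. Put
$$B:=\{u\in{^{<\omega} X}:\ g(x)\ge|u|\text{ for all }x\in({^\omega X})^M\text{ with }x\sqsupseteq u\},$$
which is defined in $M$ from $g$, hence $B\in M$. As in Theorem~\ref{powerfulrealnumcode}, $B$ is well-founded under extension: an infinite strictly increasing chain in $B$ would, by absoluteness of well-foundedness, reflect to $M$ and yield some $x\in({^\omega X})^M$ with $g(x)\ge j$ for every $j$, which is impossible. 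Since $M$ is transitive and $X\in M$ we have $X\subseteq M$, so the spine $\sigma$ (and every other canonical branch of $S_A$) lies in $M$; from $\sigma\restriction m\in B\Rightarrow g(\sigma)\ge m$ we obtain a fixed $l^{*}:=g(\sigma)+1$ with $\sigma\restriction m\notin B$ for all $m\ge l^{*}$. Fix such an $m$ and set $t:=\sigma\restriction m$.

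For $z$ on the dead-end side, $\exit(S_A)$ takes a value $\ge|t|+1$ on the whole cone above $t^\frown z$, so $g(x)\ge|t|+1$ for every $x\sqsupseteq t^\frown z$, whence $t^\frown z\in B$; for $z$ on the live side, $t^\frown z$ has length $|t|+1>g(\sigma)$ and sits past the level at which $B$ can persist along the relevant branch, so $t^\frown z\notin B$. This yields exactly $A=\{z\in X:\ g(x)\ge|t|+1\text{ for all }x\sqsupseteq t^\frown z\text{ in }M\}$, the ``moreover'' clause; and since $t$ is a finite sequence of elements of $X\subseteq M$ we have $t\in M$, so together with $B\in M$ the set $A$ is definable in $M$, giving $A\in M$.

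The delicate point -- and the one I expect to be the main obstacle -- is the live side of the last step: I must know that an $M$-function $g$ dominating $f_A$ cannot be large simultaneously on all of the branches lying above the live children of $t$, since otherwise those children would spuriously land in $B$ and the recovered set would be strictly larger than $A$. (This difficulty does not arise in Theorem~\ref{powerfulrealnumcode}, where there is a single spine and hence a single ``escape'' branch.) Handling it is where the internal shape of $S_A$ must be chosen with care: the subtree above each live child should be arranged -- for instance as a faithful copy of the whole construction -- so that $g$ being large on all of those branches at once forces an infinite strictly increasing chain into $B$, i.e.\ forces some $x\in M$ with $g(x)=\infty$, reproducing the contradiction from the spine argument of Theorem~\ref{powerfulrealnumcode}. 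Verifying that such a recursive arrangement does the job uniformly over all $z\in X$ -- necessarily so, since for uncountable $X$ the encoding of $A$ cannot be carried along a single $\omega$-branch -- is the heart of the proof.
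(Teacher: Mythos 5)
Your encoding is, after unwinding, the same as the paper's: taking $S_A$ to be the tree of finite sequences avoiding $A$ makes $\exit(S_A)(x)$ equal to one plus the first position at which $x$ meets $A$ (and $0$ if it never does), which is exactly the $f_A$ the paper uses; your set $B$ and the absoluteness/well-foundedness argument for it are also the paper's. The gap is precisely the point you flag, and the sketch you offer does not close it. Taking $t = \sigma\restriction m$ along the spine $\sigma = \langle q,q,\dots\rangle$ cannot work in general: the hypothesis $f_A \le g$ only bounds $g$ from \emph{below}, so nothing stops $g$ from being $\ge |t|+1$ on the whole cone above $t^\frown z$ for some single $z \notin A$, putting $t^\frown z$ spuriously into $B$. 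Concretely, let $X=\omega$, $A=\{0\}$, $q=1$, and define $g(x)=f_A(x)$ if $x$ ever takes the value $0$; otherwise let $g(x)=i+1$ where $i$ is least with $x(i)\ne 1$, and $g(\sigma)=0$. Then $g\ge f_A$, and for \emph{every} $m\ge 1$ the node $\langle 1,\dots,1\rangle^\frown 2$ (with $m$ ones) lies in $B$: any extension either meets $0$ at a position $\ge m+1$, giving $g\ge m+2$, or avoids $0$ and first differs from $1$ at position $m$, giving $g=m+1$. So $2$ is recovered as an element of $A$ no matter which $m$ you fix. Note also that a single spurious live child does not produce an infinite chain in $B$, so the contradiction you hope to get from ``$g$ large on all of those branches at once'' is unavailable: the failure is pointwise in $z$, not global, and no recursive re-copying of the construction inside the subtrees changes that.

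The repair is to abandon the spine and let the spurious children themselves drive the search for $t$, which is what the paper does. Start with $t=\emptyset$. The domination hypothesis gives $z\in A \Rightarrow t^\frown z\in B$ (using that every entry of the current $t$ avoids $A$, so the first meeting with $A$ along $t^\frown z^\frown\cdots$ occurs at position $|t|$). If the converse fails, pick \emph{any} $x_0\notin A$ with $\langle x_0\rangle\in B$ and replace $t$ by $\langle x_0\rangle$; iterate. If this never terminates you get an infinite $\sqsubseteq$-increasing chain in $B$, so the tree of nodes all of whose initial segments lie in $B$ is ill-founded; by absoluteness it has a branch $x'$ in $M$, and then $g(x')\ge l$ for every $l$, which is impossible. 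In the example above this search steps to $t=\langle 2\rangle$ and stops there. The point your write-up misses is that the witness $t$ is produced by this finite search through arbitrary elements of $X\setminus A$ and in general lies on no branch fixed in advance.
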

\begin{proof}
It suffices to show the second claim.
Let $f_A : {^\omega X} \to \omega$ be the function
 $$f_A(x) := \begin{cases}
 0 & \mbox{if } (\forall l \in \omega)\,
 x(l) \not\in A, \\
 l+1 & \mbox{if } x(l) \in A \mbox{ and }
     (\forall l' < l)\, x(l') \not\in A.
 \end{cases}$$
Define
 $$B := \{ t \in {^{<\omega}X} :
 g(x) \ge |t| \mbox{ for all }
 x \sqsupseteq t \mbox{ in } M \}.$$
We must find a $t \in {^{<\omega}X}$
 satisfying
 $$A = \{ z \in X :
 t ^\frown z \in B \},$$
 and we will be done.
By the hypothesis on $g$
 and the definition of $f_A$,
 for each $z \in X$,
  $z \in A$ implies
  $\langle z \rangle \in B$.
If conversely for each $z \in X$,
 $\langle z \rangle \in B$ implies
 $z \in A$, then we have
 $$A = \{ z \in X : \langle z \rangle \in B \},$$
 and we are done by defining
 $t := \emptyset$.
If not,
 then fix some
 $x_0 \in X$ satisfying
 $\langle x_0 \rangle \in B$ but $x_0 \not\in A$.

Again by the hypothesis on $g$
 and the definition of $f_A$,
 for each $z \in X$,
 $z \in A$ implies
 $\langle x_0, z \rangle \in B$.
Here it is important that
 $x_0 \not\in A$.
Again, if the converse holds
 that $\langle x_0, z \rangle \in B$
 implies $z \in A$,
 then $$A = \{ z \in X : \langle x_0, z \rangle \in B \},$$
 and we are done by defining $t := \langle x_0 \rangle$.
If not, we may fix
 $x_1 \in X$ satisfying
 $\langle x_0, x_1 \rangle \in B$ but $x_1 \not\in A$.
We may continue like this,
 but we claim that the procedure terminates
 in a finite number of steps.

Assume, towards a contradiction,
 that it does not terminate.
The sequence $$x := \langle x_0, x_1, ... \rangle$$
 we have constructed has all its
 initial segments in $B$.
However, $x$ need not be in $M$.
We handle this situation as follows:
 let $T$ be the set of those elements of $B$
 all of whose initial segments are also in $B$.
The tree $T$ is ill-founded because
 $x$ is a path through it.
Since being ill-founded is absolute,
 $T$ has some path $x'$ in $M$.
We now have
 $(\forall l \in \omega)\, g(x') \ge l$,
 which is impossible.
\end{proof}

We immediately have the following:

\begin{cor}
\label{allfuncdelta11cor}
For each $A \subseteq \baire$,
 there is a function
 $f_A : \baire \to \omega$ such that
 whenever $g : \baire \to \omega$ is any function
 which satisfies $f \le g$,
 then $A$ is $\mathbf{\Delta}^1_1$
 in a predicate for $g$.
Thus, there is a morphism from
 $\mc{E}_{All}$ to
 $\langle \p(\baire), \p(\baire),
 \le_{\mathbf{\Delta}^1_1} \rangle$.
\end{cor}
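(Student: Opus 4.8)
The plan is to read this corollary off from Proposition~\ref{horizcodeintroprop} by specializing to $X = \baire$ and taking ``$M = V$'', exactly as Corollary~\ref{everywheredomtype1cor} was obtained from Theorem~\ref{powerfulrealnumcode}. Note first that the proof of Proposition~\ref{horizcodeintroprop} uses only that $M$ is transitive, satisfies $\zf$, and has $X\in M$; the one substantive fact it invokes about $M$ is absoluteness of well-foundedness (used to get a path through $T$ inside $M$). When the quantifiers ``in $M$'' are read as unrestricted quantifiers over $V$, all of this is vacuous, so the argument goes through verbatim. Thus, fixing $A\subseteq\baire$, the proposition with $X=\baire$ gives a function $f_A^0:{^\omega(\baire)}\to\omega$ together with, for every $g^0:{^\omega(\baire)}\to\omega$ satisfying $f_A^0\le g^0$ pointwise, a finite sequence $t\in{^{<\omega}(\baire)}$ with
\[
A=\bigl\{\, z\in\baire : g^0(x)\ge |t|+1\ \text{for all}\ x\sqsupseteq t^\frown z\,\bigr\}.
\]
Since the corollary wants a function on $\baire$ rather than on ${^\omega(\baire)}$, I would fix once and for all a recursive homeomorphism $e:\baire\to{^\omega(\baire)}$ (coming from a bijection $\omega\cong\omega\times\omega$) and set $f_A:=f_A^0\circ e$.

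Now suppose $g:\baire\to\omega$ satisfies $f_A\le g$. Putting $g^0:=g\circ e^{-1}$ we get $f_A^0\le g^0$, so the displayed equation holds for some $t$, and transporting it back along $e$ the condition ``$x\sqsupseteq t^\frown z$'' becomes a clopen condition on $e^{-1}(x)$ and $z$. Hence the equation already exhibits $A$ in the form $\{z:\forall y\,\varphi(z,y)\}$ with $\varphi$ recursive in the predicate $g$, i.e.\ $A$ is $\mathbf{\Pi}^1_1$ in a predicate for $g$. To land in $\mathbf{\Delta}^1_1$ it is cleanest to carry along the auxiliary set from the proof of the proposition, $B_g:=\{\, s\in{^{<\omega}(\baire)} : g^0(x)\ge |s|\ \text{for all}\ x\sqsupseteq s\,\}$, transported to a set $\widehat{B}_g\subseteq\baire$ via a fixed recursive coding of ${^{<\omega}(\baire)}$ by reals: the proposition says precisely that $A=\{z: t^\frown z\in B_g\}$, which is a clopen preimage in the predicate $\widehat{B}_g$ and so a fortiori $\mathbf{\Delta}^1_1$ in that predicate, while $\widehat{B}_g$ is itself $\mathbf{\Pi}^1_1$ in a predicate for $g$. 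Either way, $A$ is $\mathbf{\Delta}^1_1$ in a predicate canonically built from $g$.

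Finally, to assemble the morphism from $\mc{E}_{\p(\baire)}$ to $\langle\p(\baire),\p(\baire),\le_{\mathbf{\Delta}^1_1}\rangle$, let $\phi_-:\p(\baire)\to{^{(\baire)}\omega}$ send $A$ to $f_A$, and let $\phi_+:{^{(\baire)}\omega}\to\p(\baire)$ send $g$ to $\widehat{B}_g$. The morphism inequality demanded by the definition is exactly: for every $A\subseteq\baire$ and every $g:\baire\to\omega$, $f_A\le g\Rightarrow A\le_{\mathbf{\Delta}^1_1}\widehat{B}_g$, which is what the previous paragraph proved. I do not expect a genuine obstacle here — the mathematical content is entirely inside Proposition~\ref{horizcodeintroprop}, and what remains is bookkeeping: (i) justifying the instantiation $M=V$; (ii) shuttling between ${^\omega(\baire)}$ and $\baire$, and between $g$ and a real predicate coding it; and (iii) confirming the complexity of the resulting defining formula. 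The one point worth stating carefully is (iii) together with the choice in $\phi_+$: the raw formula for $A$ in terms of $g$ alone is only $\mathbf{\Pi}^1_1$, so the predicate handed to $\phi_+$ should be (a real coding of) the derived set $B_g$, in which $A$ becomes open in the codes.
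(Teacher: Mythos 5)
Your proposal is correct and follows the paper's own route: the paper's entire proof of this corollary is ``Use the above theorem with $X=\baire$ and $M=V$,'' which is exactly your specialization of Proposition~\ref{horizcodeintroprop}. The additional bookkeeping you supply --- transporting along a recursive homeomorphism ${^\omega(\baire)}\cong\baire$, and noting that the displayed definition of $A$ directly from the predicate $g$ is only $\mathbf{\Pi}^1_1$, so that $\phi_+$ should output the derived set $B_g$ (in which $A$ becomes clopen, hence $\mathbf{\Delta}^1_1$) --- is a genuine refinement that the paper's one-line proof leaves implicit, and your choice of $\phi_+$ is what makes the stated morphism to $\langle \p(\baire), \p(\baire), \le_{\mathbf{\Delta}^1_1}\rangle$ literally check out.
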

\begin{proof}
Use the above theorem with
 $X = \baire$ and $M = V$.
\end{proof}

Now, a morphism from
 $\mc{D}_{All}$ to
 $\langle \p(\baire), \p(\baire), \prec \rangle$,
 where $\prec$ is any ordering such that
 $(\forall B \in \p(\baire))\,
 | \{ A : A \prec B \} | \le 2^\omega$,
 will imply that
 $||\mc{D}_{All}|| = 2^{2^\omega}$.
However, it is consistent that
 $||\mc{D}_{All}|| < 2^{2^\omega}$
 so there can be no such morphism.
In fact, it is consistent that
 $||\mc{R}_{All}|| < 2^{2^\omega}$.
This contrasts with the fact that
 $||\mc{E}_{All}|| = 2^{2^\omega}$.

To get a model of
 $||\mc{R}_{All}|| < 2^{2^\omega}$,
 it suffices to get a model in which
 $\mf{b} = \mf{c}$
 (so that there is a scale in
 $\langle \baire, \le^* \rangle$ of length
 $\mf{c}$)
 and the cofinality
 $\cf \langle {^\mf{c} \mf{c}}, \le \rangle$
 of all functions from $\mf{c}$ to $\mf{c}$
 ordered by everywhere domination
 is $< 2^\mf{c}$.
By $\langle {^\lambda \lambda}, \le^* \rangle$
 we mean the set of functioms from $\lambda$
 to $\lambda$ ordered by domination mod
 $< \lambda$.
By $\mf{b}$ we mean the \textit{bounding number},
 and $\mf{c} = 2^\omega$.
To get the required model, we first force so that
 1) $\mf{t} = \mf{c}$
 (where $\mf{t}$ is the \textit{tower number}),
 2) $\mf{c}$ is regular,
 3) $\mf{c}^{<\mf{c}} = \mf{c}$, and
 4) $\mf{c}^+ < 2^\mf{c}$.
Then, we force to add
 $\mf{c}^{++}$ Cohen subsets of $\mf{c}$.
This preserves 1)-4).
Finally, we force by the
 generalization of Hechler forcing
 in \cite{Cummings} to cofinally embed
 $\langle \mf{c}^+, \le \rangle$ into
 the poset of functions from $\mf{c}$ to
 $\mf{c}$ ordered by eventual mod $< \mf{c}$
 domination ($\le^*$).
A simple observation shows that
 $\cf \langle {^\mf{c} \mf{c}}, \le \rangle =
  \cf \langle {^\mf{c} \mf{c}}, \le^* \rangle$,
 and we are done.

For the last result of this section,
 let $\mc{V}_{\mathbf{\Delta}^1_1}$ be the
 c.r.-relation whose challenges and responses
 are Borel functions from
 $\baire \times \baire$ to $\omega$,
 and $g$ meets $f$ iff
 $(\forall x \in \baire)(\exists y \in \baire)\,
 f(x,y) = g(x,y)$.
By Theorem~\ref{maintheorem} we will have that
 $||\mc{D}_{\mathbf{\Delta}^1_1}|| = 2^\omega$.
It is natural to ask whether
 $||\mc{V}_{\mathbf{\Delta}^1_1}|| = 2^\omega$.
The answer is no for the following reason:
 fix an $\alpha < \omega_1$.
Using the fact that there is
 a universal $\mathbf{\Sigma}^0_\alpha$ set,
 we can build a function
 $g_\alpha : \baire \times \baire \to \omega$
 whose graph is $\mathbf{\Sigma}^0_{\alpha+1}$
 such that if $f : \baire \times \baire \to \omega$
 is a function whose graph is
 $\mathbf{\Sigma}^0_\alpha$, then $g_\alpha$ meets $f$.
Hence,
 $||\mc{V}_{\mathbf{\Delta}^1_1}|| = \omega_1$

\section{The Encoding Function}

In this section we will define
 the function $f_a : \baire \to \baire$
 which encodes $a \in \baire$
 to be used in Thorem~\ref{maintheorem}.


\begin{definition}[The Encoding Function $f_a$]
Fix $a \in \baire$.
Pick some $A \subseteq \omega$ such that
 $A =_T a$,
 $A$ is infinite, and
 $A \le_T B$ whenever $B$ is an infinite
 subset of $A$.
Here $\le_T$ means Turing reducible to and
 $=_T$ means Turing equivalent to.
Such a set $A$ is easy to construct.
We actually only need $A$ to be
 $\Delta^1_1$ in every infinite subset
 of itself.
Let $\eta : A \to \omega$ be a function
 such that $(\forall n \in \omega)\,
 \eta^{-1}(n)$ is infinite.
Consider an arbitrary $x = \langle x_0, x_1, ...
 \rangle \in \baire.$
Let $i_0 < i_1 < ...$ be the sequence of
 indices listing which numbers $x_i$ are in $A$.
That is, each $x_{i_k} \in A$, but no other
 $x_i$ is in $A$.
Define
 $$f_a(x) := \langle
 \eta(x_{i_0}), \eta(x_{i_1}), ...
 \rangle$$
If there are only finitely many $x_i$ in $A$,
 define $f_a(x)$ to be anything.
\end{definition}

One can check that the function $f_a$
 is Baire class one
 (the pointwise limit of the sequence of
 continuous functions).
One might wonder if we could define
 $f_a$ differently to be continuous but still
 encode $a$ in the sense that
 given any Borel $g : \baire \to \baire$
 satisfying $f_a \cap g = \emptyset$,
 $a$ is in some countable set associated to $g$.
The answer is no for the reason
 that the cofinality of the poset of all
 continuous functions from $\baire$ to $\omega$
 ordered by everywhere domination is $\mf{d}$,
 the dominating number,
 which can be consistently less than $2^\omega$.

\section{Reachability}

In this section we introduce
 some combinatorial lemmas
 needed for the main theorem.
The results may be of independent
 interest to the reader.

\begin{definition}
Fix $h : \bairenodes \to \omega$,
 $A \subseteq \omega$, and
 $t_1, t_2 \in \bairenodes$.
We write
 $$t_2 \sqsupseteq_h t_1$$
 and say that $t_2$ is an extension of
 $t_1$ \defemp{to the right} of $h$
 iff $t_2 \sqsupseteq t_1$ and
 $(\forall n \in \dom(t_2) - \dom(t_1))\,
 t_2(n) \ge h(t_2 \restriction n)$.
We write
 $$t_2 \sqsupseteq^A t_1$$
 iff $t_2 \sqsupseteq t_1$ and
 $(\forall n \in \dom(t_2) - \dom(t_1))\,
 t_2(n) \not\in A$.
We write
 $$t_2 \sqsupseteq^A_h t_1$$
 iff both
 $t_2 \sqsupseteq_h t_1$ and
 $t_2 \sqsupseteq^A t_1$.
\end{definition}

\begin{definition}
Given $h_1, h_2 : \bairenodes \to \omega$,
 we write $h_1 \le h_2$ iff
 $$(\forall t \in \bairenodes)\,
 h_1(t) \le h_2(t).$$
\end{definition}

The following notion is crucial
 for the ability to find $\sqsupseteq^A$
 extensions of a node $t$ in a set
 $S \subseteq \bairenodes$.
\begin{definition}
Given $t \in \bairenodes$ and
 $S \subseteq \bairenodes$,
\begin{itemize}
\item $t$ is $0$-$S$-\emph{reachable}
 iff $t \in S$;
\item for $\alpha > 0$,
 $t$ is $\alpha$-$S$-\emph{reachable}
 iff $t$ is $\beta$-$S$-reachable for
 some $\beta < \alpha$ or
 $\{ n \in \omega :
 (\exists \beta < \alpha)\,
 t ^\frown n $ is
 $\beta$-$S$-reachable$\}$ is infinite.
\item $t$ is $S$-\emph{reachable} iff
 $t$ is $\alpha$-$S$-reachable for some
 $\alpha$.
\end{itemize}
\end{definition}
A computation shows the following:
\begin{itemize}
\item $t$ is $S$-reachable iff
 $t$ is $\alpha$-$S$-reachable for some
 $\alpha < \omega^{CK}_1(S).$
\item Given $\alpha < \omega^{CK}_1$,
 the set of all $t$ that are
 $\beta$-$S$-reachable for some
 $\beta < \alpha$ is $\Delta^1_1(S)$.
\end{itemize}
\begin{lemma}[Reachability Dichotomy]
\label{reach_di}
Fix $t \in \bairenodes$,
 $S \subseteq \bairenodes$, and
 $A \subseteq \omega$ which is infinite
 and $\Delta^1_1$ in every infinite subset
 of itself.
Assume $A \not\in \Delta^1_1(S)$.
\begin{itemize}
\item If $t$ is not $S$-reachable, then
 $$(\exists h \in \Delta^1_1(S))
 (\forall t' \sqsupseteq_h t)\,
 t' \not\in S.$$
\item If $t$ is $S$-reachable, then
 $$(\forall h)(\exists t' \sqsupseteq^A_h t)\,
 t' \in S.$$
\end{itemize}
\end{lemma}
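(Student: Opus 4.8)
The plan is to prove the two halves of the dichotomy separately, using transfinite recursion on the reachability rank in each case.

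For the first bullet, suppose $t$ is not $S$-reachable. I want to produce a $\Delta^1_1(S)$ function $h$ such that no $\sqsupseteq_h$-extension of $t$ lands in $S$. The idea is to build $h$ by recursion along the tree of nodes $\sqsupseteq t$, but we need to be careful about what "not reachable" buys us locally. Since $t$ is not $S$-reachable, in particular $t \notin S$, and moreover the set $\{n : t^\frown n \text{ is reachable}\}$ is finite (otherwise $t$ would be reachable at a successor rank). So there is a threshold $m_t \in \omega$ beyond which $t^\frown n$ is not reachable for all $n \ge m_t$. Define $h(t) := m_t$; then any $\sqsupseteq_h$-one-step extension $t^\frown n$ with $n \ge h(t)$ is again not reachable, and we recurse. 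The point is that the value $m_t$ can be chosen canonically (e.g.\ as the least such threshold, or $0$ if $t$ is not even relevant), and since "$t'$ is $\beta$-$S$-reachable for some $\beta < \omega^{CK}_1$" is $\Delta^1_1(S)$ uniformly, the function $t \mapsto m_t$ is $\Delta^1_1(S)$. Then by an easy induction on $\length(t') - \length(t)$, every $t' \sqsupseteq_h t$ is non-$S$-reachable, hence $t' \notin S$. Note this half does not use $A$ at all; the hypothesis $A \notin \Delta^1_1(S)$ is only needed for the second bullet.

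For the second bullet, suppose $t$ is $S$-reachable, say with reachability rank $\alpha$, and fix an arbitrary $h : \bairenodes \to \omega$. I want a $t' \sqsupseteq^A_h t$ with $t' \in S$. The natural approach is induction on $\alpha$. If $\alpha = 0$ then $t \in S$ and $t' := t$ works. If $t$ is $\beta$-reachable for some $\beta < \alpha$, apply the inductive hypothesis directly. Otherwise, $I := \{n : t^\frown n \text{ is } \beta\text{-reachable for some }\beta < \alpha\}$ is infinite. Since $A$ is infinite and — crucially — the set of $\beta$-reachable-nodes-for-$\beta<\alpha$ is $\Delta^1_1(S)$, if $I$ contained only finitely many elements outside $A$ then $A$ would be, up to a finite modification, the complement of a $\Delta^1_1(S)$ set relative to $I$; more precisely, an infinite subset of $A$ would be $\Delta^1_1(S)$-computable from $S$, forcing $A \in \Delta^1_1(S)$, contrary to hypothesis. (This is exactly where the ``$\Delta^1_1$ in every infinite subset of itself'' property of $A$ comes in: if $I \setminus A$ were finite then $I \cap A =^* I$ would be an infinite $\Delta^1_1(S)$ subset of $A$, whence $A \in \Delta^1_1(S)$.) So we can pick $n \in I$ with $n \notin A$ and also $n \ge h(t)$ — possible because $I$, being infinite, has elements above $h(t)$, and by the same argument infinitely many of them avoid $A$. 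Then $t^\frown n \sqsupseteq^A_h t$ in one step, $t^\frown n$ has rank $< \alpha$, and we finish by the inductive hypothesis applied to $t^\frown n$ with the same $h$ (restricted appropriately).

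**The main obstacle** I anticipate is getting the $\Delta^1_1(S)$ bookkeeping right in the first bullet: one must verify that the threshold function $t \mapsto m_t$ is genuinely $\Delta^1_1(S)$ and not merely $\Sigma^1_1(S)$ or $\Pi^1_1(S)$. This rests on the stated computation that $S$-reachability equals $\alpha$-$S$-reachability for some $\alpha < \omega^{CK}_1(S)$, so that "non-reachable" is a $\Delta^1_1(S)$ condition, and on the fact that "the least $m$ such that $(\forall n \ge m)\, t^\frown n$ is non-reachable" is then an arithmetic-in-a-$\Delta^1_1(S)$-set, hence $\Delta^1_1(S)$, operation. The second obstacle, smaller, is making the induction on $\alpha$ in the second bullet formally legitimate: one should phrase it as "for every ordinal $\alpha$ and every $t$ that is $\alpha$-$S$-reachable and every $h$, there is $t' \sqsupseteq^A_h t$ with $t' \in S$" and run transfinite induction on $\alpha$, handling the "already $\beta$-reachable for $\beta < \alpha$" clause and the "infinitely many reachable children" clause as the two cases.
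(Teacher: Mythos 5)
Your proposal is correct and follows essentially the same route as the paper's proof: for the first bullet you define $h$ as the least threshold past which all one-step extensions of a non-reachable node remain non-reachable, and for the second you run the same rank-decreasing induction, using that the set $B$ of children of smaller rank is infinite and $\Delta^1_1(S)$, so that $B \setminus A$ must be infinite lest an infinite subset of $A$ (namely $B \cap A$, a finite modification of $B$) become $\Delta^1_1(S)$ and hence force $A \in \Delta^1_1(S)$. The one point you flag as the main obstacle --- that the threshold function is genuinely $\Delta^1_1(S)$ --- is exactly the step the paper also dispatches with ``a computation shows,'' so your treatment matches the paper's in both substance and level of detail.
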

\begin{proof}
First, consider the case that
 $t$ is not $S$-reachable.
If $\tilde{t}$ is a node which is
 not $S$-reachable, then there must be
 only finitely many $\tilde{t} ^\frown n$ that
 are $S$-reachable.
For each $\tilde{t}$ that is
 not $S$-reachable, define
 $h(\tilde{t})$ to be the smallest $n$ such that
 $(\forall m \ge n)\,
 \tilde{t} ^\frown m$ is not $S$-reachable.
For each $\tilde{t}$ that is
 $S$-reachable, define
 $h(\tilde{t}) = 0$.
A computation shows that
 $h \in \Delta^1_1(S)$.
This function $h$ witnesses that
 $(\forall t' \sqsupseteq_h t)\, t' \not\in S$.

Consider the second case that
 $t$ is $S$-reachable.
Fix $t$,$S$, and $A$ as in
 the statement of the lemma.
Assume that $t$ is $S$-reachable
 and fix $h : \bairenodes \to \omega$.
We must find some $t' \sqsupseteq^A_h t$
 such that $t' \in S$.

Assume that $t$ is not $0$-$S$-reachable,
 otherwise we are already done by
 setting $t' = t$.
Thus, fix the smallest $\alpha > 0$ such that
 $t$ is $\alpha$-$S$-reachable.

By induction, it suffices to find
 some $n \in \omega$ such that
 $n \not\in A$,
 $n \ge h(t)$, and
 $t ^\frown n$ is $\beta$-$S$-reachable
 for some $\beta < \alpha$.
That is, if we keep doing this,
 then we will have a decreasing sequence
 of ordinals $\alpha_0 > \alpha_1 > ...$
 which must eventually reach $0$,
 at which point we will be done.
Let $$B := \{ n \in \omega :
 (\exists \beta < \alpha)\,
 t ^\frown n \mbox{ is }
 \beta\mbox{-}S\mbox{-}\mbox{reachable}\}.$$
$B$ is infinite and $B \in \Delta^1_1(S)$.
If $B - A$ is infinite,
 we can get the desired $n$.
Now, $B - A$ must be infinite
 because otherwise
 $B \cap A =_T B$ and $B \cap A$ is infinite,
 so $$A \le_{\Delta^1_1} B \cap A =_T
 B \le_{\Delta^1_1} S,$$
 which implies $A \le_{\Delta^1_1} S$,
 a contradiction.
\end{proof}

\section{Main Theorem}

We will prove the main theorem by using
 a variant of Hechler forcing.
In fact, we could have used a slight
 variant of Hechler focing where the
 functions in the conditions are required
 to be strictly increasing
 (see \cite{Baumgartner}).
However, we thought the
 Reachability Dichotomy
 (Lemma~\ref{reach_di})
 was worth presenting for its own sake,
 and that lemma
 encapsulates the relevant rank analysis
 corresponding to what was carried out
 in \cite{Baumgartner}.

\begin{definition}
$\mathbb{H}$ is the poset of all pairs
 $(t,h)$ such that $t \in \bairenodes$
 and $h : \bairenodes \to \omega$,
 where $(t_2, h_2) \le (t_1, h_1)$ iff
 $t_2 \sqsupseteq_{h_1} t_1$ and $h_2 \ge h_1$.
Given $A \subseteq \omega$,
 we write $(t_2, h_2) \le^A (t_1, h_1)$
 iff $t_2 \sqsupseteq^A_{h_1} t_1$
 and $h_2 \ge h_1$.
\end{definition}

From the Reachability Dichotomy
 follows the Main Lemma.
Recall that
 $(\forall x,y \in \baire)\,
 x \in \Delta^1_1(y)$ iff every
 $\omega$-model $M$ which contains $y$
 also contains $x$.
\begin{lemma}[Main Lemma]
\label{main_lemma}
Let $M$ be an $\omega$-model of $\zf$
 and $U \in \mc{P}^M(\mbb{H}^M)$ be a set
 dense in $\mbb{H}^M$.
Let $A \subseteq \omega$ be infinite
 and $\Delta^1_1$ in every infinite
 subset of itself but
 $A \not\in M$.
Then
 $$(\forall p \in \mbb{H}^M)
   (\exists p' \le^A p)\, p' \in
   U.$$
\end{lemma}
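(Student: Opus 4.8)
The plan is to work inside $M$ and build, by recursion on a well-order of $U$... no---more cleanly, to perform a single recursion that alternates between meeting density requirements for $U$ and "staying to the right of $A$." Fix $p = (t, h) \in \mathbb{H}^M$. Since $A \notin M$, in particular $A \notin \Delta^1_1(S)$ for any $S \in M$ coding a relevant subset of $\bairenodes$; this is the hypothesis under which the Reachability Dichotomy (Lemma~\ref{reach_di}) applies. The key observation is that the set
$$S := \{ t' \in \bairenodes : (\exists h')\ (t', h') \leq p \text{ and } (t', h') \in U \}$$
(or rather its natural formalization: $t' \in S$ iff $t' \sqsupseteq_h t$ and there is $h' \geq h$ with $(t',h') \in U$) is a subset of $\bairenodes$ lying in $M$, hence $A \notin \Delta^1_1(S)$. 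So the dichotomy gives us: either $t$ is $S$-reachable, or there is $h^* \in \Delta^1_1(S) \subseteq M$ with $(\forall t'' \sqsupseteq_{h^*} t)\ t'' \notin S$.

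The heart of the argument is to rule out the non-reachable case. If $t$ were not $S$-reachable, then the witnessing $h^* \in M$ would satisfy $h^* \geq h$ (after replacing $h^*$ by $\max(h^*, h)$, still in $M$), and then $(t, h^*) \leq p$ would be a condition below which no extension reaches $U$: any $(t_2, h_2) \leq (t, h^*)$ has $t_2 \sqsupseteq_{h^*} t$, so $t_2 \notin S$, so $(t_2, h_2) \notin U$. This contradicts the density of $U$ in $\mathbb{H}^M$. Hence $t$ is $S$-reachable. Now apply the second clause of the Reachability Dichotomy with the given $h$: there is $t' \sqsupseteq^A_h t$ with $t' \in S$. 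Unwinding the definition of $S$, there is $h' \geq h$ with $(t', h') \leq p$ and $(t', h') \in U$. Since $t' \sqsupseteq^A_h t$ means $t' \sqsupseteq_h t$ (giving $(t',h') \leq p$, which we already have) together with $t'(n) \notin A$ for all new coordinates $n$, we get precisely $(t', h') \leq^A p$. Setting $p' := (t', h')$ finishes the proof.

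I would be careful about one point: the definition of $S$ must be phrased so that $S \in M$ and so that $S$-reachability is computed correctly relative to $M$ (the reachability notion is arithmetic in $S$, and $M$ is an $\omega$-model, so reachability is absolute between $M$ and $V$ for $S \in M$—this is what lets me invoke Lemma~\ref{reach_di} externally while concluding something about $U \in M$). I also need the trivial closure fact that $M$, being a model of $\zf$, contains $h^*$ whenever $h^* \in \Delta^1_1(S)$ with $S \in M$; this is immediate since $M \models \zf$ and the $\Delta^1_1(S)$ definition is absolute for $\omega$-models. The main obstacle is not any single hard estimate but rather setting up $S$ so that both the "density contradiction" in the non-reachable case and the extraction of $(t', h') \in U$ from $t' \in S$ in the reachable case go through cleanly; once $S$ is correctly defined, both halves of the Reachability Dichotomy plug in directly.
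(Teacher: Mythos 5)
Your proof is correct and follows essentially the same route as the paper's: define a set $S$ of stems witnessing membership in $U$, observe $A \notin \Delta^1_1(S)$ because $M$ is an $\omega$-model not containing $A$, and split on $S$-reachability of $t$ via Lemma~\ref{reach_di}, contradicting density in the non-reachable case and extracting the $\sqsupseteq^A_h$ witness in the reachable case. Your only deviation is building the condition $(t',h') \le p$ into the definition of $S$ (the paper takes $S = \{t : (\exists h \in M)\,(t,h) \in U\}$), which is a harmless refinement that makes the bookkeeping of $h' \ge h$ slightly more explicit.
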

\begin{proof}
Define
 $$S := \{ t \in \bairenodes :
 (\exists h \in M)\, (t,h) \in U \}.$$
We have $S \in M$.
It must be that
 $A \not\in \Delta^1_1(S)$,
 because otherwise since $M$ is an $\omega$-model,
 we would have $A \in M$.

Now fix an arbitrary $p = (t,h) \in \mbb{H}^M$.
We must find some $p' = (t',h')
 \le^A (t,h)$ such that $p' \in U$
 (and so $h' \in M$).
It suffices to find some $t' \in S$ such that
 $t' \sqsupseteq^A_h t$.

There are two cases: $t$ is $S$-reachable or not.
If $t$ is not $S$-reachable, then by the Reachability
 Dichotomy (Lemma~\ref{reach_di})
 there is $h \in \Delta^1_1(S)$ such that
 $(\forall t' \sqsupseteq_h t)\, t' \not\in S$.
Since $M$ is an $\omega$-model and $S \in M$,
 such an $h$ would be in $M$.
Unpacking the definition of $S$,
 we get that $U$ is not dense in $\mbb{H}^M$, a contradiction.

The other case is that $t$ is $S$-reachable.
Lemma~\ref{reach_di} gives us a $t' \in S$ such that
 $t' \sqsupseteq^A_h t$, which is what we wanted.
\end{proof}

This next theorem refers to the function
 $f_a$ defined in Section 3.

\begin{theorem}[Main Theorem]
\label{maintheorem}
Let $\Gamma$ be the pointclass
 of all sets defined by formulas in
 a certain class (so it makes sense to
 talk about $\Gamma$-formulas).
Let $\prec$ be an ordering on $\baire$
 such that whenever $c,a \in \baire$
 are such that $a \not\prec c$, then
 there exists an $\omega$-model $M$
 of $\zf$ such that
\begin{itemize}
\item $c \in M$;
\item $a \not\in M$;
\item $\mc{P}^M(\mbb{H}^M)$
 is countable (in $V$);
\item for every forcing extension $N$
 (in $V$) of $M$ by $\mbb{H}^M$,
 the truth (in $V$) of $\Gamma$ formulas
  with real parameters in $N$ can be
  computed in $N$.
\end{itemize}

Then for any $a \in \baire$ and
 $g : \baire \to \baire$
 in $\Gamma$,
\begin{eqnarray*}
 f_a \cap g = \emptyset
 & \Rightarrow &
 a \prec (\mbox{any code for }g).
\end{eqnarray*}
\end{theorem}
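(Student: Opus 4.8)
The plan is to argue by contraposition: assume $a \not\prec c$ where $c$ is a code for $g$, and produce $x \in \baire$ with $f_a(x) = g(x)$, so that $f_a \cap g \ne \emptyset$. First I would invoke the hypothesis on $\prec$ to obtain an $\omega$-model $M$ of $\zf$ with $c \in M$, $a \not\in M$, $\mc{P}^M(\mbb{H}^M)$ countable in $V$, and $\Gamma$-absoluteness between $V$ and any $\mbb{H}^M$-extension of $M$. Inside $M$, the code $c$ names a $\Gamma$ function $\tilde g : (\baire)^M \to (\baire)^M$; since $\Gamma$-formulas defining the graph of $g$ are absolute, $\tilde g$ will agree with $g$ on the reals of the relevant forcing extension. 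The key combinatorial fact I would use is the Main Lemma (Lemma~\ref{main_lemma}): since $A \not\in M$ (recall $A =_T a$ and $A$ is $\Delta^1_1$ in every infinite subset of itself, and $a \not\in M$ gives $A \not\in M$), for every dense $U \in \mc{P}^M(\mbb{H}^M)$ and every $p \in \mbb{H}^M$ there is $p' \le^A p$ in $U$. Because $\mc{P}^M(\mbb{H}^M)$ is countable in $V$, I can enumerate all dense subsets of $\mbb{H}^M$ lying in $M$ and, using the Main Lemma repeatedly, build a descending sequence of conditions meeting each one via $\le^A$-extensions; the union of the stems gives a filter $G$ generic over $M$, and crucially the generic real $x := \bigcup \{ t : (t,h) \in G \}$ satisfies: every coordinate added beyond the original stem lies outside $A$.

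Next I would unwind what genericity buys us. The generic real $x$ is Hechler-generic, so it dominates (eventually, everywhere past the stem) all ground-model functions; I would use this and the standard density arguments to control where the entries of $x$ landing in $A$ occur, and to ensure that the subsequence $\langle x_{i_0}, x_{i_1}, \ldots \rangle$ of entries of $x$ lying in $A$ has prescribed $\eta$-images. The point is that by the stem-building, I have complete freedom to insert, at stage $k$, a number from $A$ with $\eta$-value equal to $a(k)$ — because $\eta^{-1}(n)$ is infinite for each $n$ and the Hechler side condition $h$ only forces entries to be large, never forbidding membership in $A$ at a chosen coordinate. Meanwhile the $\le^A$-extensions guarantee that between such insertions no spurious element of $A$ appears. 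Hence I can arrange $f_a(x) = \langle \eta(x_{i_0}), \eta(x_{i_1}), \ldots \rangle = a$... wait, more precisely: I arrange that the $\eta$-values read off $x$ reproduce $g(x)$ coordinate by coordinate, using density to simultaneously decide longer and longer initial segments of $\tilde g(x)$.

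The real work is the following interleaving argument. Let $N = M[G]$. By $\Gamma$-absoluteness, $\tilde g^N(x) = g(x)$, and $g(x) \in \baire$ is some definite real. I want $f_a(x) = g(x)$. The subtlety is that $f_a(x)$ depends on which entries of $x$ lie in $A$, and I must make that subsequence of $\eta$-values equal $g(x)$; but $g(x)$ depends on all of $x$, so there is a circularity. I resolve it by a dense-set/fusion argument carried out while building $G$: at step $k$ I first extend the stem generically enough (avoiding $A$ via $\le^A$) to decide $g(x)\restriction (k+1)$ — this uses that deciding $\tilde g(x)\restriction(k+1)$ is a dense condition in $M$ — obtaining a value $m_k := g(x)(k)$; then I extend the stem by one further coordinate equal to some element of $A$ with $\eta$-value $m_k$ (legal since $\eta^{-1}(m_k)$ is infinite, so I can pick one above the current $h$-bound); the $\le^A$-clause keeps all other new coordinates out of $A$, so this $A$-element is exactly $x_{i_k}$. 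Iterating through all $k$ and through all dense sets of $M$ gives $G$ generic over $M$ with $\eta(x_{i_k}) = m_k = g(x)(k)$ for all $k$, i.e. $f_a(x) = g(x)$, contradicting $f_a \cap g = \emptyset$. The main obstacle — and the step I would be most careful about — is exactly this simultaneous scheduling: ensuring the "decide the $k$-th value of $g(x)$" tasks and the "insert the $k$-th $A$-entry with the right $\eta$-value" tasks can be woven together with the countably many genericity requirements without conflict, which is where the Main Lemma's $\le^A$-strengthening (rather than mere $\le$) is essential, since it is what prevents an uncontrolled $A$-entry from sneaking in and corrupting the subsequence read by $f_a$.
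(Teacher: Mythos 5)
Your proposal is correct and follows essentially the same route as the paper: contrapose, obtain the $\omega$-model $M$ from the hypothesis on $\prec$, note $A \notin M$, and interleave three tasks while descending in $\mbb{H}^M$ --- hitting the (countably many) dense sets of $M$ via the Main Lemma's $\le^A$-extensions, forcing a value $m_k$ for $\tilde g(\dot x)(k)$, and then making one plain $\le$-extension inserting an element of $\eta^{-1}(m_k) \subseteq A$ above the current $h$-bound --- finally using $\Gamma$-absoluteness between $M[x]$ and $V$ to transfer $\tilde g(x)(k)=m_k$ to $g(x)(k)=m_k$. The scheduling concern you flag at the end is resolved exactly as you describe, and is how the paper's proof proceeds.
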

\begin{proof}
Fix $a$, $g$, and an arbitrary code $c$
 for $g$.
In any model $N$ which contains $c$
 and which can compute the truth (in $V$)
 of $\Gamma$ formulas with real parameters in $N$,
 let $\tilde{g}$ refer to the
 function $g \cap N$ (which is in $N$).
Suppose $a \not\prec c$.
Fix an $\omega$-model $M$ as in the
 hypothesis of the theorem.
Let $A \subseteq \omega$ be the set
 from the definition of $f_a$
 that is $\Delta^1_1$
 in every infinite subset of itself
 and $a =_T A$.
Note that $A \not\in M$.

We will construct an $x \in \baire$ satisfying
 $f_a(x) = g(x)$ and this will prove the theorem.
Let $$\langle U_n \in \p^M(\mbb{H}^M) :
 n < \omega \rangle$$
 be an enumeration (in $V$)
 of the dense subsets of $\mbb{H}^M$ in $M$.
Let $\dot{x}$ be the canonical name
 for the generic real added by
 $\mbb{H}^M$.
We will construct a decreasing
 sequence of conditions of $\mbb{H}^M$
 which hit each $U_n$.
The $x \in \baire$ will be the union of
 the stems in this sequence
 (and it will be generic over $M$
 having the name $\dot{x}$).

Starting with $1 \in \mbb{H}^M$,
 apply the Lemma~\ref{main_lemma} to get $p_0 \le^A 1$
 in $U_0$.
Then, apply Lemma~\ref{main_lemma} again to get
 $p_0' \le^A p_0$ and $m_0 \in \omega$
 such that $(p_0' \forces
 \tilde{g}(\dot{x})(0) = \check{m}_0)^M$.
Next, extend the stem of $p_0'$ by one to get
 $p_0'' \le p_0'$ to ensure that
 $f_a(x)(0) = m_0$.

Next, get $p_1'' \le p_1' \le^A p_1 \le^A p_0''$
 such that $p_1 \in U_1$,
 $(p_1' \forces
 \tilde{g}(\dot{x})(1) = \check{m}_1)^M$
 for some $m_1 \in \omega$, and
 $p_1''$ extends the stem of $p_1'$ by one
 to ensure that $f_a(x)(1) = m_1$.
Continue forever like this.

The $x$ we have constructed is generic for
 $\mbb{H}^M$ over $M$.
Let $N = M[x]$.
For each $n \in \omega$ we have
 $(\tilde{g}(x)(n) = m_n)^N$.
Since $\Gamma$-formulas are absolute between
 $N$ and $V$, for each $n \in \omega$ we have
 $$g(x)(n) = m_n.$$
On the other hand, for each $n \in \omega$
 we have $f_a(x)(n) = m_n$.
\end{proof}

In the following,
 $\mc{M}_n(y)$ refers to the cannonical
 proper class model with $n$ Woodin cardinals
 which contains $y \in \baire$.
For each $n \in \omega$ and $y \in \baire$,
 $\baire \cap \mc{M}_n(y)$ is countable.
When we write $a \in \mc{M}_n(c)$,
 we will be making the assumption that
 $\mc{M}_n(c)$ exists, which has large
 cardinal strength.

\begin{cor}
\label{maincor}
Fix $a \in \baire$,
 $\Gamma$,
 $g : \baire \to \baire$ in $\Gamma$,
 and a code $c$ for $g$.
Assume $f_a \cap g = \emptyset$.
\begin{itemize}
\item $\Gamma = \mathbf{\Delta}^1_1
 \Rightarrow
 a \in \Delta^1_1(c)$;
\item $\Gamma = \mathbf{\Delta}^1_2
 \Rightarrow
 a \in L(c)$;
\item $\Gamma = \mathbf{\Delta}^1_3
 \Rightarrow
 a \in \mc{M}_1(c)$;
\item $\Gamma = \mathbf{\Delta}^1_4
 \Rightarrow
 a \in \mc{M}_2(c)$;
\item ...
\end{itemize}
\end{cor}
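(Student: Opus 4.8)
The plan is to derive Corollary~\ref{maincor} from the Main Theorem (Theorem~\ref{maintheorem}) by, for each choice of $\Gamma$ in the list, exhibiting an ordering $\prec$ on $\baire$ satisfying the hypothesis of the theorem, and checking that $a \prec c$ gives the desired conclusion (e.g.\ $a \in L(c)$, $a \in \mc{M}_2(c)$, etc.). The unifying principle is that the Main Theorem needs, for each pair $c, a$ with $a \not\prec c$, an $\omega$-model $M$ of $\zf$ with $c \in M$, $a \notin M$, with $\p^M(\mbb{H}^M)$ countable in $V$, and with $\Gamma$-formulas absolute between $V$ and every $\mbb{H}^M$-generic extension of $M$. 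So the task reduces to: (i) choose $\prec$; (ii) produce such an $M$; (iii) verify the relevant level of forcing absoluteness.

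First I would handle the case $\Gamma = \mathbf{\Delta}^1_1$, taking $\prec$ to be $\le_{\Delta^1_1}$. If $a \not\le_{\Delta^1_1} c$, then by the characterization recalled just before the Main Lemma there is an $\omega$-model $M \models \zf$ with $c \in M$ and $a \notin M$; by a Löwenheim--Skolem argument we may take $M$ countable, so $\p^M(\mbb{H}^M)$ is countable in $V$. The required absoluteness is then Mostowski absoluteness of $\mathbf{\Sigma}^1_1$ (hence $\mathbf{\Delta}^1_1$) statements, which holds between any transitive — or even any $\omega$- — model and $V$; one must note that a countable $\omega$-model need not be wellfounded, so one should either arrange $M$ wellfounded (possible since the statement ``$a \notin M$'' can be witnessed in a wellfounded model, by taking a countable elementary submodel of some $H_\theta$ and collapsing) or invoke $\Sigma^1_1$-absoluteness for $\omega$-models directly. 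For $\Gamma = \mathbf{\Delta}^1_2$, take $\prec$ to be $\le_L$, i.e.\ $a \le_L c$ iff $a \in L[c]$. If $a \notin L[c]$, take a countable wellfounded $M = L_\alpha[c]$ (or a countable transitive model of enough $\zfc$ containing $c$) with $a \notin M$; the needed absoluteness of $\mathbf{\Sigma}^1_2$ statements between set-forcing extensions and $V$ is Shoenfield absoluteness, which applies to any forcing and any wellfounded model, so this goes through cleanly.

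The remaining cases $\Gamma = \mathbf{\Delta}^1_{3+2n}$ use the $\mc{M}_{2+2n}$ models and large-cardinal-based absoluteness. Here I would take $\prec$ to be $\le_{\mc{M}_{2+2n}}$, with $a \prec c$ iff $a \in \mc{M}_{2+2n}(c)$. Assuming $\mc{M}_{2+2n}(c)$ exists for every real $c$, if $a \notin \mc{M}_{2+2n}(c)$ then one takes $M$ to be (a countable elementary, hence collapsed, submodel of) $\mc{M}_{2+2n}(c)$ — this is wellfounded, contains $c$, omits $a$, and has countably many reals, so $\p^M(\mbb{H}^M)$ is countable in $V$. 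The key input is that in the presence of $2k$ Woodin cardinals, $\mathbf{\Sigma}^1_{2k+1}$-statements are absolute between $V$ and its set-forcing extensions (Woodin's generic absoluteness), and that $\mc{M}_{2k}(c)$ correctly computes $\mathbf{\Sigma}^1_{2k+1}$-truth with parameter $c$; combining these yields $\mathbf{\Delta}^1_{2k+1}$-absoluteness between every $\mbb{H}^M$-extension of $M$ and $V$ for $k = 1+n$, which is exactly what Theorem~\ref{maintheorem} demands. The last bullet, $\Gamma = \bigcup_{n} \mathbf{\Delta}^1_n$ giving $a \in \mc{M}_\omega(c)$, follows by letting $n \to \infty$: a real not in $\mc{M}_\omega(c)$ is, for some finite $k$, not in $\mc{M}_{2k}(c)$, and then projective absoluteness at the appropriate level (granted $\omega$ Woodins) applies to any single projective formula.

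The main obstacle I anticipate is not the forcing-absoluteness theorems themselves, which are quotable, but the interface between them and the hypothesis of Theorem~\ref{maintheorem}: the theorem asks for absoluteness of $\Gamma$-formulas between $V$ and \emph{every} $\mbb{H}^M$-generic extension $N$ of $M$ (not just between $M$ and $V$), and it asks this for the possibly-illfounded countable $\omega$-model $M$ rather than for $V$ itself. For the wellfounded choices of $M$ above (transitive collapses of elementary submodels), one can run the chain $N \preceq_\Gamma V$ by first noting $M \preceq_\Gamma V$ via the canonical-model correctness and then $N \preceq_\Gamma M$... no: one actually wants $N$ and $V$ to agree, which is obtained by comparing each to a common larger model, or more simply by observing that $N$ is itself a set-generic extension of the transitive $M$, $M$ is $\mathbf{\Sigma}^1_{2k+1}$-correct, and generic absoluteness is preserved upward; making this last chain precise — in particular checking that the absoluteness one gets really is symmetric ($N \models \varphi \iff V \models \varphi$) and not merely one implication — is the step deserving care. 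I would present the $\mathbf{\Delta}^1_1$ and $\mathbf{\Delta}^1_2$ cases in full detail and then remark that the higher cases are identical modulo replacing Shoenfield absoluteness by Woodin's generic absoluteness and $L$ by $\mc{M}_{2k}$.
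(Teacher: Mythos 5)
Your proposal is correct and follows essentially the same route as the paper: instantiate Theorem~\ref{maintheorem} with $\prec$ equal to $\le_{\Delta^1_1}$, $\le_L$, or $\le_{\mc{M}_{2+2n}}$, take $M$ to be a countable $\omega$-model (resp.\ a level of $L[c]$, resp.\ a countable hull of $\mc{M}_{2+2n}(c)$) omitting $a$, and quote analytic absoluteness, Shoenfield, and Steel's Lemma 4.6 (Woodin's generic absoluteness for the canonical models), respectively. The paper's own proof is just a terse citation of these same three absoluteness facts, so your more detailed verification of the hypotheses of the Main Theorem is a faithful expansion of the intended argument.
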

\begin{proof}
The first bullet holds because
 $\Delta^1_1$ formulas are absolute
 between $\omega$-models and $V$,
 and whenever $a \not\in \Delta^1_1(r)$,
 there is some $\omega$-model of $\zf$ which
 contains $r$ but not $a$.
The second bullet holds by
 Shoenfield's Absoluteness Theorem.
The last two bullets hold because a forcing
 extension of $\mc{M}_{1 + n}$ below its bottom
 Woodin cardinal can compute the truth of
 $\mathbf{\Delta}_{3 + n}$ formulas with real parameters
 in $N$.
For more information related to the last
 two bullets, see Lemma 4.6 of [Steel].
\end{proof}

From the top bullet of this corollary,
 it follows that there is a morphism
 from $\mc{D}_{\mathbf{\Delta}^1_1}$ to
 $\langle \baire, \baire, \le_{\Delta^1_1} \rangle$.
From the second bullet,
 it follows that there is a morphism
 from $\mc{D}_{\mathbf{\Delta}^1_2}$ to
 $\langle \baire, \baire, \le_L \rangle$, etc.

\section{Necessity of Hypotheses}

Let $\Gamma = \bigcup_{n \in \omega}
 \mathbf{\Delta}^1_n$
 be the pointclass of projective sets.
By Corollary~\ref{maincor},
 if $g : \baire \to \baire$ is a projective function
 and $f_a \cap g = \emptyset$,
 then $a \in \bigcup_{n < \omega} \mc{M}_n(c)$
 where $c$ is any code for $g$.
This implies that
 $||\mc{D}_{\Gamma}|| = 2^\omega$.
It is natural to ask whether
 $||\mc{D}_{\Gamma}|| = 2^\omega$ can be proved in
 $\zfc$ alone
 (the assumption that the
 $\mc{M}_n(c)$ exist goes far beyond $\zfc$).
We can ask the following stronger question:
\begin{question}
Does $\zfc$ prove that for each projective
 $g : \baire \to \baire$ there is a countable set
 $G(g) \subseteq \baire$, and for each
 $a \in \baire$ there is a projective function
 $f_a : \baire \to \baire$ such that
 $(\forall a \in \baire)(\forall g)\,$
 $$f_a \cap g = \emptyset \Rightarrow
 a \in G(g)?$$
\end{question}

We do not know how to answer the above question.
The problem is that the functions $f_a$
 for various $a$ may have nothing to do with one another.
We can, however, answer the following:
\begin{question}
\label{projquestion}
Does $\zfc$ prove that there exist functions
 $f_a$ and countable sets $G(g)$
 as in the above question but with the additional
 requirement that the mapping
 $(a,x) \mapsto f_a(x)$ is projective?
\end{question}

We will now argue that the answer to
 Question~\ref{projquestion} is no.
It suffices to show that $\zfc$ does not prove
 there is a pair of mappings
 $a \mapsto f_a$ and $g \mapsto G(g)$ such that
 $(a,x) \mapsto f_a(x)$ is projective and
 $(\forall a \in \baire)(\forall g)$
 $$(\forall x \in \baire)\,
 f_a(x) \le^* g(x) \Rightarrow
 a \in G(g),$$
 because the pointwise eventual domination relation
 is above the disjointness relation.

Consider a model of the following statements:
\begin{itemize}
\item[1)] There is a projective wellordering of the reals
 of ordertype $2^\omega$;
\item[2)] $\neg \ch$;
\item[3)] $\mf{b} = 2^\omega$.
\end{itemize}
Statement 3) is equivalent to saying that each
 subset of $\baire$ of size $< 2^\omega$
 is $\le^*$-dominated by a single element of $\baire$.
The construction of a model in which
 $\ma + \neg \ch$ holds
 (and therefore $\mf{b} = 2^\omega$)
 and there is a projective wellordering of the reals
 is done in \cite{Harrington}.
Consider a given encoding $a \mapsto f_a$
 such that the map $(a,x) \mapsto f_a(x)$ is projective.
The mapping which takes $a \in \baire$ to
 a code for $f_a$ is projective.
Let $\prec$ be the projective wellordering
 given by 1).
For each $b \in \baire$, we may define
 the function $g_b : \baire \to \baire$ as follows:
 $$g_b(x) := \mbox{ the } {\prec\mbox{-least }} y \in \baire \mbox{ such that }
 (\forall a \prec b)\, f_a(x) \le^* y.$$
Note that the prewellordering $\prec$ is used twice.
Because $\mf{b} = 2^\omega$,
 this function is indeed well-defined.
It is also projective.
Now, consider a set $\mc{A} \subseteq \p(\baire)$
 of size $\omega_1$.
Since $\neg \ch$, we may fix a single $b$ satisfying
 $(\forall a \in \mc{A})\, a \prec b$.
By definition of $g_b$, we have
 $$(\forall a \in \mc{A})(\forall x \in \baire)\, f_a(x) \le^* g_b(x).$$
On the other hand,
 given the countable set $G(g_b) \subseteq \p(\baire)$,
 it cannot be that $\mc{A} \subseteq G(g_B)$.
Hence, the encoding is not as required.

\section{A Forcing Free Proof}

In Corollary~\ref{maincor}
 we showed that if $g : \baire \to \baire$
 is Borel and $c$ is any code for $g$, then
 $$f_a \cap g = \emptyset \Rightarrow
 a \in \Delta^1_1(c),$$
 where $f_a$ is defined in Section 3.
In this section we will present a different
 and forcing free proof that
 $$f_a \cap g = \emptyset \Rightarrow
 a \in \Sigma^2_1(c).$$
To avoid complications,
 we will actually consider functions
 from $\baire$ to $\cantor$.
The function $f_a$ can be modified
 into a function from $\baire$ to $\cantor$
 by simply replacing $\eta : A \to \omega$
 with $\eta : A \to 2$ in the original
 definition of $f_a$.
We will prove the desired result by
 proving the contrapositive.
That is, fix $a \in \baire$,
 Borel $g : \baire \to \cantor$,
 and a code $c \in \baire$ for $g$.
Fix $A \subseteq \omega$ that is Turing equivalent
 to $a$ and $A$ is computable from every infinite
 subset of itself.
Assume that
 $a \not\in \Sigma^2_1(c)$.
We must construct an $x \in \baire$ such that
 $$f_a(x) = g(x).$$
The following game theoretic notion
 is how we will get a forcing free proof:
\begin{definition}
Given a function $j : \baire \to 2$,
 and an $m \in 2$,
 $\mc{G}(j,m)$ is the game where Player
 I plays a pair $(t,h) \in \mbb{H}$ that is $\le$ the current pair and
 Player II plays a pair $(t,h) \in \mbb{H}$ that
 is $\le^A$ the current pair.
After infinitely many moves, let
 $x \in \baire$ be the union of the first elements
 of the pairs played.
Player II wins iff $j(x) = m$.
We say that
 $(t,h)$ \defemp{ensures} that $j(x) = m$ iff
 Player II has a winning strategy for
 $\mc{G}(j,m)$ where the starting position
 is $(t,h)$.
\end{definition}

\begin{lemma}
If for each $i \in \omega$ and
 $(t,h) \in \mbb{H}$
 there exists $m \in 2$
 and $(t',h') \le^A (t,h)$
 which ensures $g(x)(i) = m$,
 then there exists an $x \in \baire$
 such that $f_a(x) = g(x)$.
\end{lemma}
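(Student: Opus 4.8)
The plan is to build the real $x \in \baire$ by a fusion-style recursion along $\omega$, at stage $i$ committing to the value $g(x)(i)$ using the hypothesis. We maintain a decreasing sequence of conditions in $\mbb{H}$; crucially, the moves alternate between ordinary $\le$-extensions and $\le^A$-extensions so that at the end the union $x$ of the stems has enough $\sqsupseteq^A$-steps to guarantee $f_a(x) = g(x)$. The key structural point is the same one underlying the Main Theorem: whether a coordinate $x(n)$ lies in $A$ is what $f_a$ reads off, and the $\le^A$-moves (which by definition only add coordinates \emph{outside} $A$) are exactly the ones that let us later insert a single coordinate \emph{inside} $A$ to dictate the next output bit of $f_a$, i.e. to force $f_a(x)(i)$ to be a prescribed $m$.

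Concretely, I would start from $1 = (\emptyset, h_0) \in \mbb{H}$ (any $h_0$) and suppose inductively we have a condition $(t_i, h_i)$ together with the commitment that $f_a(x)(j) = g(x)(j)$ for all $j < i$ (the first $i$ coordinates of $A$ appearing in $t_i$ have been placed so that their $\eta$-values match the already-determined bits $g(x)(0), \dots, g(x)(i-1)$). Applying the Lemma's hypothesis with this $(t_i, h_i)$ and with $i$ itself, we obtain $m_i \in 2$ and $(t_i', h_i') \le^A (t_i, h_i)$ which \emph{ensures} $g(x)(i) = m_i$; Player II's winning strategy $\sigma_i$ for $\mc{G}(g(\cdot)(i), m_i)$ from position $(t_i', h_i')$ gets recorded. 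Then, since $\eta^{-1}(m_i) \subseteq A$ is infinite and $\eta$ can realize any value in $2$, we pick $n \in \eta^{-1}(m_i)$ large enough that $n \ge h_i'(t_i')$ and extend the stem by this one coordinate: $t_i'' := (t_i')^\frown n$, $h_i'' := h_i'$, so $(t_i'', h_i'') \le (t_i', h_i')$. This single in-$A$ coordinate is the $(i+1)$st element of $A$ to appear in the stem, and by construction of $\eta$ it forces $f_a(x)(i) = \eta(n) = m_i$, preserving the inductive commitment. Set $(t_{i+1}, h_{i+1}) := (t_i'', h_i'')$ and continue.

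At the end, let $x := \bigcup_i t_i \in \baire$ — one must check the stems go to infinity, which holds because each stage extends the stem by at least one. We then need to see $x$ is a legitimate outcome of each game $\mc{G}(g(\cdot)(i), m_i)$ played according to $\sigma_i$, so that Player II wins and hence $g(x)(i) = m_i$. This is the technical heart: the moves of the construction after stage $i$ — the $\le^A$-moves from the Lemma's hypothesis at later stages and the one-coordinate $\le$-extensions — must be arrangeable as a legal run of $\mc{G}(g(\cdot)(i), m_i)$ in which Player II follows $\sigma_i$. Here one exploits that in $\mc{G}(j,m)$ Player I plays $\le$-steps and Player II plays $\le^A$-steps: the stem-extending one-coordinate moves (Player I-type, $\le$) and the Lemma-supplied $\le^A$ moves (Player II-type) interleave correctly, and by having Player II obey $\sigma_i$ on her moves we get convergence of the run's real to the very same $x$ we built. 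Thus $g(x)(i) = m_i = f_a(x)(i)$ for every $i$, so $f_a(x) = g(x)$.

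The main obstacle I anticipate is bookkeeping the interleaving of the countably many strategies $\sigma_i$ simultaneously: the run of game $\mc{G}(g(\cdot)(i), m_i)$ uses as its subsequent Player I moves not arbitrary conditions but the specific ones dictated by the stages $j > i$ of our construction, and one must verify these are legal (i.e. really $\le$ the current position in that game) and that Player II's $\sigma_i$-responses are compatible with — indeed, are chosen to be — the $\le^A$-moves used at stage $j$. The clean way to handle this is to decide, before running the construction, that at stage $j$ the $\le^A$-move $(t_j', h_j') \le^A (t_j, h_j)$ supplied by the hypothesis is taken to be the $\sigma_i$-response (for the one relevant active game, or more carefully, a diagonal merge of the responses of $\sigma_0, \dots, \sigma_j$), and that the function $h$ in the played conditions is always increased enough to dominate all finitely many strategies' current demands; the domination ordering on the $h$'s and the fact that $\mbb{H}$-conditions are downward directed in the required sense make this merge possible. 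Once that merging scheme is pinned down, the rest is the routine verification sketched above.
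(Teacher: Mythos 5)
Your proposal is correct and follows essentially the same route as the paper's proof: build a decreasing sequence of conditions, at stage $i$ invoke the hypothesis to fix $m_i$ and record Player II's winning strategy, consult every active strategy once per stage so that each game's run is cofinal in the construction (all other moves being absorbed into Player I's $\le$-moves) and its outcome real is $x$, and then insert a single element of $\eta^{-1}(m_i)\subseteq A$ above the current $h$ to force $f_a(x)(i)=m_i$. The one slip is where you suggest the hypothesis-supplied $\le^A$-move could itself serve as the strategies' responses --- it cannot, since the hypothesis hands you a specific ensuring condition --- but the fix is the interleaving you already describe and the paper uses: make the ensuring move, the $\sigma_0,\dots,\sigma_i$-responses, and the stem extension as separate successive moves, noting that $\le^A$-moves compose and that from the viewpoint of game $i$ everything but $\sigma_i$'s response is a legal Player I move.
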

\begin{proof}
Our $x$ will be the union
 of the first elements of the pairs in the sequence
 we will construct.
Start with the condition $(\emptyset,h) \in \mbb{H}$
 where $h$ is arbitrary.
Let $m_0 \in 2$ and
 $(t_0,h_0) \le^A (t,h)$ be such that
 $(t_0,h_0)$ ensures $g(x)(0) = m_0$.
Fix a winning strategy $\eta_0$ for Player II
 for the corresponding game.
Have Player II play according to $\eta_0$
 for one move to get
 $(t'_0, h'_0) \le^A (t_0, h_0)$.
Extend $t'_0$ by one to get
 $(t''_0,h'_0) \le (t'_0,h'_0)$ so that
 $f_a(x)(0) = m_0$.

Let $m_1 \in 2$ and
 $(t_1, h_1) \le^A (t''_0, h'_0)$
 be such that $(t_1, h_1)$ ensures
 $g(x)(1) = m_1$.
Fix a winning strategy $\eta_1$ for
 Player II for the corresponding game.
Have Player II play according to $\eta_0$
 for one more and according to $\eta_1$
 for one more (in the correct games) to get
 $(t'_1, h'_1) \le^A (t_1, h_1)$.
Extend $t'_1$ by one to get
 $(t''_1, h'_1) \le (t'_1, h'_1)$ so that
 $f_a(x)(1) = m_1$.
Continue like this forever.
\end{proof}

Once the next lemma is proved,
 we will be done.

\begin{lemma}
Assuming $a \not\in \Sigma^2_1(c)$,
 for each Borel $j : \baire \to 2$
 and $(t,h) \in \mbb{H}$, there exists
 $m \in 2$ and $(t',h') \le^A (t,h)$
 which ensures $j(x) = m$.
\end{lemma}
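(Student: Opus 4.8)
The plan is to prove the contrapositive in the same style as the earlier forcing-based argument, but packaging the genericity requirement into the game $\mc{G}(j,m)$. Assume $a \not\in \Sigma^2_1(c)$ and fix Borel $j : \baire \to 2$ and a condition $(t,h) \in \mbb{H}$. First I would define the set of positions from which Player II can ensure a given value: for $m \in 2$, let $W_m$ be the set of $(t',h') \le^A (t,h)$ which ensure $j(x) = m$. The key reduction is that it suffices to show \emph{some} extension $(t',h') \le^A (t,h)$ lies in $W_0 \cup W_1$. To analyze this, I would consider the tree of plays of $\mc{G}(j,m)$ and observe that "II has a winning strategy from $(t',h')$'' is, via determinacy of this closed/Borel game (Player II wins a Borel set, so the game is determined), equivalent to the negation of "I has a winning strategy''. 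The complexity bookkeeping here is where $\Sigma^2_1$ enters: a winning strategy for Player I is a real (a function on finite sequences of conditions, and conditions $(t,h)$ with $h:\bairenodes\to\omega$ are themselves reals), so "I has a winning strategy to make $j(x)\neq m$'' is a $\Sigma^2_1(c,j\text{-code})$ statement, and hence so is its failure being uniform.

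The heart of the argument is a reachability-style dichotomy for the game, paralleling Lemma~\ref{reach_di} and the Main Lemma. I would set up, inside a sufficiently rich countable structure — but now, crucially, \emph{without forcing}, using instead a $\Sigma^2_1$-correct transitive set or simply arguing directly about strategies — the following: either for some extension $(t',h') \le^A (t,h)$ and some $m$, Player II wins $\mc{G}(j,m)$ from $(t',h')$, or else for \emph{every} $m$ Player I wins $\mc{G}(j,m)$ from $(t,h)$. In the latter case, fixing winning strategies $\sigma_0, \sigma_1$ for Player I in the two games, I would extract from them (by a construction analogous to the $S$-reachability analysis, playing Player II's moves to go $\sqsupseteq^A$ and Player I responding via $\sigma_0$ in one game and $\sigma_1$ in the other) a single real $x$ with $j(x) \neq 0$ and $j(x) \neq 1$, which is absurd since $j$ takes values in $2$. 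Making this extraction legitimate requires diagonalizing against both strategies simultaneously while still producing $\sqsupseteq^A$-increasing stems, exactly as in the proof of Lemma~\ref{reach_di}, where the obstruction "$A \le_{\Delta^1_1} S$'' is replaced by "$a \in \Sigma^2_1(c)$''.

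The main obstacle I anticipate is the complexity calculation that pins the needed definability at $\Sigma^2_1(c)$ rather than something higher: one must verify that the auxiliary set $S$ (the analogue of $\{t : (\exists h)\,(t,h) \text{ ensures something}\}$) and the reachability ordinals for the game are absolutely definable from $c$ and a Borel code for $j$ by a $\Sigma^2_1$ formula — quantifying over strategies (reals) on the outside, with arithmetic-in-a-real matrix — so that the assumption $a \not\in \Sigma^2_1(c)$ is exactly strong enough to forbid $A$ being captured by $S$. Once that is in place, the dichotomy goes through and the construction of $x$ with $f_a(x) = g(x)$ follows by the preceding lemma: we build the decreasing sequence of conditions, at stage $i$ invoking this lemma to get $m_i \in 2$ and $(t_i,h_i)$ ensuring $g(x)(i) = m_i$, then play one move of Player II's winning strategy and extend the stem by one to force $f_a(x)(i) = m_i$. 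The resulting $x$ satisfies $f_a(x) = g(x)$.
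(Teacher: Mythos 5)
There is a genuine gap, and it is in the central mechanism of your argument. You propose to run a one-shot determinacy dichotomy: either some $(t',h') \le^A (t,h)$ ensures a value of $j(x)$, or Player I wins both $\mc{G}(j,0)$ and $\mc{G}(j,1)$, in which case you would play the two Player I strategies $\sigma_0, \sigma_1$ against each other to manufacture an $x$ with $j(x) \neq 0$ and $j(x) \neq 1$. That extraction does not work, because the game is asymmetric: Player I makes arbitrary $\le$-extensions while Player II is constrained to $\le^A$-extensions. A winning strategy for Player I in $\mc{G}(j,0)$ produces moves that are only $\le$-extensions, so they cannot legally be fed in as Player II's moves in a run of $\mc{G}(j,1)$ against $\sigma_1$ (and vice versa); there is no single play consistent with both strategies in the required roles. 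Indeed, the whole content of the lemma is that Player II can control $j(x)$ \emph{despite} the handicap of only moving $\sqsupseteq^A$, and determinacy alone cannot see the hypothesis $a \not\in \Sigma^2_1(c)$ that makes this possible. Your proposal gestures at ``replacing the obstruction $A \le_{\Delta^1_1} S$ by $a \in \Sigma^2_1(c)$,'' but never specifies the set $S$ to which the Reachability Dichotomy is applied, which is exactly where the work lies.

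The paper's proof has a different and essential skeleton: induction on the Baire rank of $j$. The base case is continuous $j$; for the inductive step one writes $j = \lim_n j_n$ pointwise and assumes the lemma for each $j_n$. One then defines, for $n \in \omega$ and $m \in 2$, the set $S(n,m)$ of stems $t'$ such that some $(t',h')$ ensures $j_{n'}(x) = m$ for some $n' \ge n$; this set is $\Sigma^2_1(c)$ (it quantifies over strategies in a game of real information), so $A$ is not $\Delta^1_1$ in it, and Lemma~\ref{reach_di} applies. If at some stage the current stem cannot reach $S(n_i+1, 1-m_i)$, the witnessing $\tilde h$ yields a condition ensuring $j(x) = m_i$ (all later $j_n(x)$ can only be ensured to equal $m_i$), and we are done. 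If the construction never terminates, the ensured values $m_i$ alternate along a single real $x$, so $\lim_n j_n(x)$ does not exist --- contradiction. None of this inductive machinery, the sets $S(n,m)$, or the alternation argument appears in your proposal, and without them the argument does not close. (Also, your final paragraph about building $x$ with $f_a(x) = g(x)$ belongs to the preceding lemma, not to this one.)
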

\begin{proof}
This can be proved by induction on the
 rank of $j$ within the
 Baire hierarchy.
The base case is when $j$ is continuous,
 and the proof is immediate.
For the induction step,
 assume that
 $\langle j_n : n \in \omega \rangle$
 is a sequence of Borel functions such that
 $$(\forall x \in \baire)\,
 j(x) = \lim_{n \to \infty} j_n(x).$$
Assume that for each $n \in \omega$
 and $(\tilde{t},\tilde{h}) \in \mbb{H}$,
 there exists $m' \in 2$
 and $(\tilde{t}',\tilde{h}')
 \le^A (\tilde{t},\tilde{h})$
 which ensures $j_n(x) = m'$.

Let $n_0 = 0$.
Let $m_0 \in 2$ and
 $(t_0, h_0) \le^A (t,h)$ ensure
 $j_{n_0}(x) = m_0$.
Let $\eta_0$ be a winning strategy
 for Player II for
 $\mc{G}(j_{n_0}, m_0)$.
The strategy $\eta_0$ should be
 applied infinitely often for
 the remainder of the construction
 (assuming it does not terminate).

For $n \in \omega$ and $m \in 2$,
 let $S(n,m) \subseteq \bairenodes$
 be the following set:
 $$S(n,m) := \{ t' \in \bairenodes :
 (\exists n' \ge n)(\exists h')\,
 (t',h') \mbox{ ensures }
 j_{n'}(x) = m \}.$$
There are two cases:
 either $t_0$ is $S(n_0+1,1-m_0)$-reachable
 or not.
First, assume that it is not.
We may fix $\tilde{h} \ge h$
 from Lemma~\ref{reach_di}
 such that
 $(\forall t' \sqsupseteq_{\tilde{h}} t_0)\,
 t' \not\in S(n_0+1,1-m_0)$.
We claim that
 $(t_0, \tilde{h})$ ensures
 $j(x) = m_0$.
To see why, consider the following
 strategy of Player II:
 1) make $\le^A$-extensions to either
 ensure the value of $j_n(x)$ for
 all $n \le 1$ (and these values can
 only be ensured to be $m_0$), and
 2) periodically play according to
 the winning strategies being produced
 from the ensuring process.
When the game finishes,
 calling $x$ the real constructed,
 $j_n(x) = m_0$ for all $n \ge n_0$,
 and so also $j(x) = m_0$.

The other case is that
 $t_0$ is $S(n_0 + 1, 1-m_0)$-reachable.
It is important that $t_0$ can reach
 $S(n_0 + 1, 1-m_0)$ by making a
 $\le^A$-extension, instead of an
 arbitrary $\le$-extension.
The set $S(n_0 + 1, 1-m_0)$
 is $\Sigma^2_1(c)$ (because the
 definition of the set existentially
 quantifies over winning strategies
 for a game of real information).
It cannot be that $A$ is
 $\Sigma^2_1$ in $S(n_0 + 1, 1-m_0)$,
 because if it was then by transitivity
 we would have that
 $a$ is $\Sigma^2_1(c)$.
Since $A$ is not $\Sigma^2_1$
 in $S(n_0 + 1, 1-m_0)$,
 it is also not $\Delta^1_1$ in it,
 so by Lemma~\ref{reach_di} we may fix
 $(t_0',h_0) \le^A (t_0,h_0)$ such that
 $t_0' \in S(n_0 + 1, 1-m_0)$.
At this point, apply the strategy $\eta_0$
 one time to get
 $(t_0'', h_0'') \le^A (t_0', h_0)$.
Since $t_0' \in S(n_0+1,1-m_0)$,
 get $n_1 > n_0$, $m_1 = 1 - m_0$, and
 $(t_1, h_1) \le^A (t_0'', h_0'')$
 that ensures
 $j_{n_1}(x) = m_1$.
Let $\eta_1$ be a winning strategy for
 Player II for $\mc{G}(j_{n_1}, m_1)$.
The strategy $\eta_1$, along with $\eta_0$,
 should be applied infinitely often for
 the remainder of the construction
 (assuming it does not terminate).

There are now two cases:
 either $t_1$ is
 $S(n_1 + 1, 1 - m_1)$-reachable or not.
If not, then we are done by reasoning
 similar to before.
If $t_1$ is $S(n_1 + 1, 1 - m_1)$-reachable,
 then we continue the construction
 and the question becomes whether
 it ever terminates.
Suppose, towards a contradiction,
 that the construction does not terminate.
Let $x \in \baire$ be the sequence
 that has been constructed.
For all $i \in \omega$ we have
 $j_{n_i}(x) = m_i$.
However, the $m_i$'s alternate, so the limit
 $\lim_{n \to \infty} j_n(x)$
 cannot exist, which is a contradiction.
\end{proof}

\section{Acknowledgements}

I would like to thank Andreas Blass for reading
 through the arguments here and making suggestions.
I would also like to thank Trever Wilson for
 explaining how much truth a forcing extension
 of $\mc{M}_n$ can compute.
Finally, I would like to thank the referee
 for finding a significant simplification
 in the main theorem.

\end{document}